\documentclass[12pt]{Paper}

 \usepackage[foot]{amsaddr}
\usepackage[framemethod=TikZ]{mdframed}
\usepackage{lipsum}

\usepackage{fancybox}
 \usepackage{enumerate}
  
\usepackage{cite}
\usepackage{scalerel}
\usepackage{amsthm,lipsum}
\usepackage{amsmath}
\usepackage{amssymb}
  \usepackage{paralist}
  \usepackage{graphics} 
  \usepackage{epsfig} 
  \usepackage{csquotes}
  \usepackage{subcaption}
\usepackage{graphicx}  
\usepackage{epstopdf}
 \usepackage[colorlinks=true]{hyperref}
\usepackage{bm}
\usepackage{siunitx}
\hypersetup{urlcolor=blue, citecolor=red}
\usepackage[margin=1.2in]{geometry}
\AfterEndEnvironment{figure}{\noindent\ignorespaces}

\newtheorem{theorem}{Theorem}[section]

\newtheorem{lemma}[theorem]{Lemma}

\newtheorem{conjecture}[theorem]{Conjecture}

\theoremstyle{definition}
\newtheorem{remark}{Remark}[section]

\usepackage[T1]{fontenc}    
\usepackage[utf8]{inputenc} 

\renewenvironment{proof}{{\bf \noindent Proof.}}{\hfill $\square$\vspace{0.25cm}} 

\def\S{\mathbb{S}}
\def\ra{\frac{r}{\sqrt{3}}}

\def\f{\longrightarrow}

\def\N{\mathbb{N}}

\def\l{\lambda}

\def\a{\alpha}
\def\b{\beta}
\def\<{\langle}
\def\>{\rangle}

\def\L{\mathcal{L}}

\def\R{\mathbb{R}}

\def\O{\mathcal{O}}
\def\inte{\textnormal{int}\,}
\def\clo{\textnormal{cl}\,}

\def\bdry{\textnormal{bdry}\,}

\def\proj{\textnormal{proj}\,}

\def\nvarphi{\bp\nvarphi}

\def\bp{\hspace{-0.08cm}}

\def\bbbp{\hspace{-0.01cm}}

\def\proj{\textnormal{proj}}

\newenvironment{proofof}[1]
{\par\noindent\textbf{Proof of #1.}\ }
{\hfill$\square$\par}

\makeatletter
\newcommand{\Largeish}{\fontsize{14.5}{16.5}\selectfont} 
\makeatother

\makeatletter
\newcommand{\Largeishsub}{\fontsize{13.5}{15.5}\selectfont} 
\makeatother

\makeatletter
\renewcommand{\section}{\@startsection{section}{1}{0pt}%
  {1em}{1em}{\normalfont\Largeish\bfseries}}
\makeatother

\usepackage{titlesec}

\titleformat{\subsection}[block] {\normalfont\Largeishsub\bfseries}{\thesubsection}{1em}{}

\usepackage{parskip}

\makeatletter
\newcommand*\rel@kern[1]{\kern#1\dimexpr\macc@kerna}
\newcommand*\widebar[1]{%
  \begingroup
  \def\mathaccent##1##2{%
    \rel@kern{0.8}%
    \overline{\rel@kern{-0.8}\macc@nucleus\rel@kern{0.2}}%
    \rel@kern{-0.2}%
  }%
  \macc@depth\@ne
  \let\math@bgroup\@empty \let\math@egroup\macc@set@skewchar
  \mathsurround\z@ \frozen@everymath{\mathgroup\macc@group\relax}%
  \macc@set@skewchar\relax
  \let\mathaccentV\macc@nested@a
  \macc@nested@a\relax111{#1}%
  \endgroup
}
\makeatother

\begin{document}

\title[Validity of the Strong Version\;$\dots$]{Validity of the Strong Version of the Union of Uniform  Closed Balls Conjecture in the Plane}

\author[C. Nour and J. Takche]{}
\author{Chadi Nour$^*$}
\address{Department of Computer Science and Mathematics, Lebanese American University, Byblos Campus, P.O. Box 36, Byblos, Lebanon}
\email{cnour@lau.edu.lb}
\thanks{$^*$Corresponding author}

\author{Jean Takche}
\email{jtakchi@lau.edu.lb}

\subjclass{52A20, 49J52, 49J53}

\keywords{Interior and exterior sphere condition, union of uniform closed balls property, union of uniform closed balls conjecture, proximal analysis}


\begin{abstract} We prove the validity of the strong version of the union of uniform closed balls conjecture, formulated in 2011 as \cite[Conjecture 2.5]{JCA2011}, in the plane.
\end{abstract}
\maketitle
\vspace{-0.7cm}
\section{Introduction} Let $S\subset\mathbb{R}^n$ be a nonempty and closed set and let $r>0$. We recall that $S$ is said to satisfy the {\it interior $r$-sphere condition} if, for every {\it boundary} point of $S$, there exists a closed ball of radius $r$ contained in $S$ whose boundary contains that point. This is equivalent to saying that $S$ is regular closed and for every boundary point $s$ of $S'$, the complement of the interior of $S$, there exists a nonzero proximal normal vector to $S'$ at $s$ which is {\it realized by an $r$-sphere}. Here, for a boundary point $a$ of a nonempty and closed set $A\subset\R^n$, we say that a nonzero vector $\zeta\in N_{A}^P(a)$, the {\it proximal normal cone} to $A$ at $a$, is realized by an $r$-sphere, if \begin{equation*} \label{realized}\left\<\frac{\zeta}{\|\zeta\|},x-a\right\>\leq \frac{1}{2r}\|x-a\|^2,\;\forall x\in A\;\;\;\; \left[\hbox{or equivalently}\; B\left(a+r\frac{\zeta}{\|\zeta\|};r\right)\cap A=\emptyset\right],\end{equation*} where $B(x;\rho)$ denotes the open ball of radius $\rho$ centered at $x$. One can easily see that if $S$ is the union of closed balls with common radius $r$, then it satisfies the interior $r$-sphere condition. The converse is not necessarily true as it is shown in \cite[Example 4.1]{JCA2009}. In this latter, Nour, Stern and Takche provided a nonempty and closed set $S\subset\R^2$ satisfying the interior $1$-sphere condition but fails to be the union of closed balls with common radius $1$. While the set $S$ of \cite[Example 4.1]{JCA2009} is not the union of closed balls with common radius $1$, it is the union of closed balls with common radius $\frac{1}{\sqrt{3}}$. This led Nour, Stern and Takche to introduce in  \cite[Conjecture 4.4]{JCA2009} the following conjecture called {\it weak version of the union of uniform closed balls conjecture}. 

\begin{conjecture}[\bbbp{\cite[Conjecture 4.4]{JCA2009}}] \label{weakconj} Let $S\subset\mathbb{R}^n$ be a nonempty and closed set satisfying the interior $r$-sphere condition for some $r>0$. Then there exists $r'>0$ such that $S$ is the union of closed balls with common radius $r'$.
\end{conjecture}
In \cite{JCA2011}, Nour, Stern and Takche proved, using techniques from nonsmooth analysis,  the validity of Conjecture \ref{weakconj} for $r'=\frac{r}{2}$. In fact, they proved the following theorem. 

\begin{theorem}[\bbbp{\cite{JCA2011}}] \label{weakth} Let $S\subset\mathbb{R}^n$ be a nonempty and closed set satisfying the interior $r$-sphere condition for some $r>0$. Then $S$ is the union of closed balls with common radius $\frac{r}{2}$. \end{theorem}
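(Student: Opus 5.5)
The plan is to show that every point $x\in S$ belongs to a closed ball of radius $\frac r2$ contained in $S$. Throughout, let $S'$ denote the closure of the complement of $S$; since $S$ is regular closed, $\bdry S=\bdry S'$. The first step is an elementary observation: if $x\in\bar B(c;r)\subset S$ for some $c$, then $x$ lies in $\bar B\big(c+\frac r2\,\frac{x-c}{\|x-c\|};\frac r2\big)\subset \bar B(c;r)\subset S$ (any point $c'$ on the ray works if $x=c$). So it suffices to prove, for each $x\in S$, that either $\bar B(x;\frac r2)\subset S$ or $x$ lies in some closed $r$-ball contained in $S$. Boundary points are covered directly by the interior $r$-sphere condition. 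For an interior point $x$, set $d:=d_{S'}(x)>0$. If $d\ge \frac r2$ we are done, so assume $0<d<\frac r2$.

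For such $x$, pick a nearest point $s\in S'$ to $x$. Then $s\in\bdry S$, and $u:=\frac{x-s}{d}$ is a unit proximal normal to $S'$ at $s$. I would run a ``maximal tangent ball'' argument along the ray from $s$ in direction $u$. Let
\[
t^*:=\sup\{t\ge d:\ \bar B(s+tu;t)\subset S\}.
\]
Then $t^*\ge d$, and $x\in\bar B(s+tu;t)$ for every admissible $t\ge d$, so $x\in \bar B(s+t^*u;t^*)$. If $t^*\ge \frac r2$, then $x$ lies in a closed ball of radius at least $\frac r2$ inside $S$, and the first step (applied with that radius) finishes the proof. So the remaining task is to rule out $t^*<\frac r2$.

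Suppose $t^*<\frac r2$. By closedness and compactness, the ball $\bar B(s+t^*u;t^*)$ lies in $S$, and its boundary meets $S'$ at $s$. Maximality should force a second contact point: there must exist $s_2\in S'\cap \partial B(s+t^*u;t^*)$ with $s_2\neq s$. Indeed, if $s$ were the only contact point, a small increase of $t$ keeps the ball inside $S$, since the growing balls are nested and all tangent at $s$. This needs care near $s$: I would use that $\bar B(c;r)\subset S$ for the interior ball at $s$, together with $t^*<r$. In fact this local tangency analysis already suggests comparing $u$ with the interior-ball direction at $s$.

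At each contact point $s_i$ (with $s_1=s$), the vector from $s_i$ to the center $z:=s+t^*u$ is a proximal normal to $S'$ realized by a $t^*$-sphere. By hypothesis there is also an interior ball $\bar B(c_i;r)\subset S$ with $s_i$ on its boundary. The key geometric claim is the following. If two points $s_1,s_2$ lie on a sphere of radius $t^*<\frac r2$ whose open ball misses $S'$, and each $s_i$ admits an $r$-ball in $S$ touching at $s_i$, then the union
\[
\bar B(z;t^*)\cup\bar B(c_1;r)\cup\bar B(c_2;r)
\]
contains a strictly larger ball through $x$. This contradicts maximality, or else directly shows that $x$ lies in one of the $\bar B(c_i;r)$.

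To prove the claim, I would first reduce to the plane spanned by $z,s_1,s_2$ where possible. Then I would use the proximal normal inequality for $S'$ at $s_i$ with respect to both the $r$-sphere normal $c_i-s_i$ and the $t^*$-sphere normal $z-s_i$. The constraint $t^*<\frac r2$ is exactly what should make the $r$-balls at $s_1$ and $s_2$ swallow the short arc of the small sphere between them, and hence the point $x$. This step is the main obstacle. The example in \cite[Example 4.1]{JCA2009} shows that the radius $r$ itself cannot be achieved, so the estimate must genuinely lose a factor and be sharp-ish at $\frac r2$. If the direct geometric comparison becomes messy, my fallback is a nonsmooth-analysis route. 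I would study the function $y\mapsto d_{S'}(y)$ along the segment from $x$ toward $z$, and use the fact that proximal normals to $S'$ at boundary points are realized by $r$-spheres. The aim is to show that $d_{S'}$ cannot have a local maximum (a ``ridge'') at distance less than $\frac r2$ from $S'$: at such a ridge, the two nearest points would have normals whose realizing $r$-balls overlap enough to push $d_{S'}$ higher.
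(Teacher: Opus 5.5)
This theorem is only quoted in the paper (from \cite{JCA2011}), so there is no proof of it there to compare with. Your plan is, however, the same maximal-ball scheme the paper uses for Theorem~\ref{mainthm}, and measured against that argument it has a genuine gap.

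Your preliminary reductions are fine. The problem is that the step you call ``the main obstacle'' is the whole content of the theorem, and the logic around it does not close.

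\textbf{The dichotomy does not give a contradiction.} The number $t^*$ is a supremum over the one-parameter family $\bar B(s+tu;t)$ only. A strictly larger ball through $x$ in any other position is compatible with that definition. So in your alternative (a) you obtain neither a contradiction nor a ball of radius $\frac r2$ containing $x$.

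To get a contradiction you would need one of two things:
\begin{itemize}
\item an extremal problem over \emph{all} balls containing $x$, where a larger ball really is a contradiction; or
\item as the paper does, to keep growing in one-parameter families until enough contact points appear (along the ray to get $s_0'$, then along the perpendicular bisector of $[s_0,s_0']$ to get $s_0''$), and then derive the contradiction from the geometry of the contacts.
\end{itemize}
Two contact points do not block enlargement in general. For example, with two antipodal contacts at which $S'$ is a thin wedge, you can shift the center orthogonally by a small $\varepsilon$ and enlarge the radius by $\delta$, provided $\varepsilon^2>2t^*\delta+\delta^2$. In the plane the paper needs three contacts and closes with the angle-sum argument of Lemma~\ref{geolem}, which is also where the sharp constant comes from. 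Nothing in your proposal produces the constant $\frac r2$.

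\textbf{The ingredients of your claim are not established, and the data you plan to use cannot establish them.}

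(i) Second contact point. Maximality along the ray yields a second contact point only if you exclude that points of $S^c\cap\bar B(s+t_nu;t_n)$, with $t_n\downarrow t^*$, accumulate at $s$. Their limiting direction $v$ satisfies $\langle v,u\rangle=0$. One interior ball $\bar B(c;r)$ at $s$ only gives $\langle v,c-s\rangle\le 0$, and some unit $v\perp u$ always satisfies this. So nested tangent balls plus one interior ball do not suffice. The paper excludes accumulation (Claim~1 in the proof of Theorem~\ref{mainthm}) only through Lemma~\ref{secondlem}. There, a limiting argument at boundary points $s_n,s_n'$ on small circles around $s$ shows that $s$ is nonregular and carries two $r$-realized normals $\zeta_0,\xi_0$ lying strictly on opposite sides of $u$ (the paper's $\zeta$; see \eqref{final}). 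Hence every $v\perp u$ has positive inner product with one of them.

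(ii) The key claim. You propose to use only that $S'$ misses $B(z;t^*)\cup B(c_i;r)$. These data are equally consistent with $\bar B(c_i;r)$ being externally tangent to $\bar B(z;t^*)$ at $s_i$. In that configuration:
\begin{itemize}
\item each $r$-ball meets $\bar B(z;t^*)$ only at $s_i$;
\item any closed ball inside the union of the three balls that contains $x\in B(z;t^*)$ lies in $\bar B(z;t^*)$, since a segment leaving $\bar B(z;t^*)$ inside the union would have to exit through $s_1$ or $s_2$;
\item $x$ lies in neither $r$-ball.
\end{itemize}
So both alternatives of your claim fail. Ruling out such choices of $c_i$ requires the interior sphere condition at boundary points \emph{near} $s_i$, i.e.\ again a Lemma~\ref{secondlem}-type analysis. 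That lemma is genuinely planar (oriented angles, arcs of circles), and you offer no $n$-dimensional substitute.

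The fallback via ridges of $d_{S'}$ restates the goal rather than giving an argument.
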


In \cite[Example 2]{DCDS2011}, Nour, Stern and Takche generalized  \cite[Example 4.1]{JCA2009} to $\R^n$. More precisely, they provided, for any $r>0$, a nonempty and closed set $S\subset\R^n$ satisfying the interior $r$-sphere condition but fails to be the union of closed balls with common radius $r'$ for any $r'> \frac{nr}{2\sqrt{n^2-1}}$. As a conclusion and since $$\frac{r}{2}<  \frac{nr}{2\sqrt{n^2-1}}\xrightarrow[n\f+\infty]{}\frac{r}{2},$$ Nour, Stern and Takche deduced that $\frac{r}{2}$ is the {\it largest} radius $r'$ that can be taken in Conjecture \ref{weakconj} and works in $\R^n$ for all $n\geq 2$. Moreover, since no counterexample in which $S$ fails to be the union of closed balls with common radius  $\frac{nr}{2\sqrt{n^2-1}}$ is found, they introduced in  \cite[Conjecture 3]{DCDS2011}, see also \cite[Conjecure 2.5]{JCA2011}, the following conjecture called {\it strong version of the union of uniform closed balls conjecture}. 

\begin{conjecture}[\bbbp{\cite[Conjecture 2.5]{JCA2011}}] \label{strongconj} Let $S\subset\mathbb{R}^n$ be a nonempty and closed set satisfying the interior $r$-sphere condition for some $r>0$. Then $S$ is the union of closed balls with common radius $\frac{nr}{2\sqrt{n^2-1}}$. 
\end{conjecture}

Although Conjecture~\ref{strongconj} was formulated more than fifteen years ago, it remains unresolved even in the lowest nontrivial dimensions. In particular, neither a proof nor a counterexample is known in the planar case $n=2$ or in the three-dimensional case $n=3$.

The aim of the present paper is to resolve Conjecture~\ref{strongconj} in the planar case. Our main result, stated in the following theorem, shows that the strong version of the union of uniform closed balls conjecture holds in $\mathbb{R}^2$ with the optimal constant $\frac{r}{\sqrt{3}}$.

\begin{theorem} \label{mainthm}
Let $S\subset\mathbb{R}^2$ be a nonempty and closed set satisfying the interior $r$-sphere condition for some $r>0$. Then $S$ is the union of closed balls with common radius $\frac{r}{\sqrt{3}}$. 
\end{theorem}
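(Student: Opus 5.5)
By scaling we may assume $r=1$; set $\rho:=\frac{1}{\sqrt3}$. We need to show that every $x\in S$ lies in a closed ball $\bar B(c;\rho)\subset S$. Interior points at distance at least $\rho$ from $S'$, where $S':=\clo(\R^2\setminus S)$, are trivially covered. So fix $x\in S$ with $d:=d_{S'}(x)<\rho$ and consider the set $P(x)$ of its nearest points in $S'$. We aim to find a direction $u\in\S^1$ such that $c:=x+(\rho-\|x-c\|)\cdots$, that is, $c=x+tu$ with $t\le \rho$ and $\bar B(c;\rho)\cap \inte S'=\emptyset$. Equivalently, $B(c;\rho)\subset \inte S$ up to boundary.

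The basic tool is the interior $1$-sphere condition applied at each $s\in\bdry S'$. For each such $s$ there is a unit vector $\zeta_s$, a proximal normal to $S'$ realized by a $1$-sphere, so that $\bar B(s+\zeta_s;1)\subset S$. The set $S$ therefore contains the union $U:=\bigcup_{s}\bar B(s+\zeta_s;1)$ of unit balls, and $S=U\cup\{$points at distance $\ge1$ from $S'\}$. The problem thus reduces to covering a point $x$ lying in a region bounded by unit circles "from inside", using $\rho$-balls.

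I would take $s\in P(x)$. If $x$ lies in some unit ball $\bar B(y;1)\subset S$, then the ball of radius $\rho$ internally tangent at the appropriate point and containing $x$ works whenever $x$ is within the right region. In particular, any point of $\bar B(y;1)$ lies in a $\rho$-ball inside $\bar B(y;1)$, since $\rho<1$. So the only delicate points are those $x$ in $S$ not contained in any single realized unit ball, i.e. points covered only by the "gaps". Following the analysis behind Theorem~\ref{weakth} and the example \cite[Example 4.1]{JCA2009}, I would show via planar geometry that near such $x$ the set $S'$ is locally pinned by at most three boundary points $s_1,s_2,s_3\in P(x)$ (or points nearby), each with a realized unit ball. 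The claim to prove is then a planar lemma: if three (or two) unit closed disks $\bar B(y_i;1)\subset S$ and the nearest-point structure of $S'$ near $x$ force $S'$ to avoid the curvilinear triangle they bound, then that triangle contains a disk of radius $\rho$ through $x$. The extremal configuration is three mutually tangent-like unit disks centered on an equilateral triangle, where the inscribed "gap" region has inradius exactly $\frac{1}{\sqrt3}$ in the limit. This explains the constant, matching \cite[Example 4.1]{JCA2009}.

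Concretely, the steps are as follows.

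\textbf{(i)} Use the proximal normal inequality to show that the sphere condition at each $s\in P(x)$ forces $\zeta_s=\frac{x-s}{\|x-s\|}$ whenever $d$ is small relative to $1$, or else yields a unit ball containing $x$.

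\textbf{(ii)} When $P(x)$ is a singleton $s$, take $c=s+\rho\frac{x-s}{d}$. Prove $\bar B(c;\rho)\subset\bar B(s+\zeta_s;1)\subset S$ after showing $\zeta_s$ is aligned. If it is not aligned, perturb and use a limiting/compactness argument.

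\textbf{(iii)} When $P(x)$ has several points, the convex hull of $P(x)$ on the circle of radius $d$ around $x$ determines the geometry. If $x\notin\mathrm{co}\,P(x)$, an open half-circle argument gives a direction $u$ moving away from all of $P(x)$, and the ball is found as in (ii). If $x\in\mathrm{co}\,P(x)$, pick two or three points whose hull contains $x$ and apply the planar three-disk lemma.

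I expect step (iii) to be the main obstacle, specifically showing that the disks $\bar B(s_i+\zeta_{s_i};1)$ together cover a $\rho$-disk containing $x$, since the $\zeta_{s_i}$ need not point toward $x$. Here I would first try to prove that the angular gaps between consecutive $\zeta_{s_i}$ are at most $2\pi/3$. The worst case, with angles exactly $2\pi/3$, is where $\sqrt3$ arises, and it should be handled by an explicit trigonometric optimization.
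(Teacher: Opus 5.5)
There is a genuine gap. Once you set aside the points lying in some closed unit disk contained in $S$, each of your targets would put $x$ back into such a disk. So the plan cannot succeed on exactly the points it is meant to handle:
\begin{itemize}
\item an aligned realized normal in (i) gives $x=s+d\zeta_s\in\bar B(s+\zeta_s;1)$;
\item the inclusion $\bar B(c;\rho)\subset\bar B(s+\zeta_s;1)$ in (ii) traps $x$, since $\|x-c\|=\rho-d$;
\item a $\rho$-disk through $x$ covered by the disks $\bar B(s_i+\zeta_{s_i};1)$ in (iii) puts $x$ in one of them.
\end{itemize}

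Such points exist at arbitrarily small distance from $S'$. Let $S$ be three mutually tangent closed unit disks together with the curvilinear triangle they enclose; this is the extremal case. The tangency points $v_1,v_2,v_3$ form an equilateral triangle of side $1$ with circumcentre $o$ and circumradius $\rho$.
\begin{itemize}
\item Using only $\|c-v_k\|\ge 1$, one checks that no point of $]v_1,o]$ lies in a unit disk contained in $S$.
\item The only realized unit normals at $v_1$ are the two orthogonal to $o-v_1$. So (i) is false. In fact, every realized unit normal at a nearest point of such an $x$ makes an angle larger than $\cos^{-1}(d/2)$ with $\zeta$, and Lemma~\ref{secondlem} produces two of them with $\angle(\zeta_0,\xi_0)\ge\pi$.
\item Your three-disk lemma fails too. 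The enclosed curvilinear triangle has inradius $\frac{2}{\sqrt3}-1<\rho$. The good disk is the circumdisk $\bar B(o;\rho)$, which protrudes into the unit disks and is certified only because $S'$ cannot enter it.
\item The candidate ball in (ii) is also wrong. Take $x$ in the cusp near $v_1$ but slightly off $[v_1,o]$. Then still $P(x)=\{v_1\}$ and $x$ lies in no unit disk of $S$. Yet $\bar B(v_1+\rho\zeta;\rho)$ is the circumdisk rotated about $v_1$, so it contains $v_2$ or $v_3$ in its interior. No perturbation or compactness argument can rescue it.
\item The identity $S=U\cup\{d_{S'}\ge 1\}$ is false as well: it would make $S$ a union of unit balls.
\end{itemize}

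What is missing is a way to bring in boundary points that are not in $P(x)$ and may be far from $x$ compared with $d$ (here $v_2,v_3$). The paper argues by contradiction from $x_0\in S$ lying in no $\rho$-ball contained in $S$.
\begin{itemize}
\item Lemma~\ref{secondlem} uses small circles around the nearest point $s_0$ that meet $S^c$. It shows $s_0$ is a corner carrying two normals $\zeta_0,\xi_0$ realized by unit spheres, with $\angle(\zeta_0,\xi_0)\ge\pi$ and both far from $\zeta$.
\item The ball $\bar B(s_0+t\zeta;t)$ is enlarged to its maximal radius $r_1<\rho$, producing a second contact point $s_0'$.
\item The balls through $s_0,s_0'$ are then enlarged to a maximal radius $r_2<\rho$, producing a third contact point $s_0''$. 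The radii stay below $\rho$ because $x_0$ remains inside each ball.
\item Normal-cone estimates show the three points are distinct, and Lemma~\ref{secondlem} applies at each of them.
\item The law-of-cosines estimate of Lemma~\ref{geolem} makes every angle of the triangle $s_0s_0's_0''$ smaller than $\pi/3$, contradicting the angle sum.
\end{itemize}
Your outline has no counterpart of either the extremal-normal lemma or the ball-growing step, and that is where the constant $\frac{1}{\sqrt3}$ is actually earned.
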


The proof of Theorem~\ref{mainthm} relies on a geometric argument that is specific to the planar setting. 
The key ingredient is a local analysis of inward normal directions at boundary points under the interior sphere condition. 
In dimension two, proximal normal cones can be represented by sectors of the unit circle, which allows the directions to be ordered and compared via oriented angles. This structure makes it possible to identify extremal normal directions and to derive sharp angular estimates for the corresponding interior tangent balls. Such an argument relies essentially on the one-dimensional geometry of directions and does not admit a direct analogue in higher dimensions.

The paper is organized as follows. In Section~\ref{prem}, we introduce the notation used throughout the paper and recall basic definitions and preliminary results related to proximal analysis and the interior sphere condition. Section~\ref{proof} is devoted to the proof of Theorem~\ref{mainthm}. In the final section, Section \ref{sectionfinal}, we discuss possible extensions of the result and outline perspectives for a generalization to higher-dimensional spaces.

%

\section{Preliminaries} \label{prem}

Throughout the paper, $\langle\cdot,\cdot\rangle$ denotes the Euclidean inner
product in $\mathbb{R}^n$ and $\|\cdot\|$ the associated norm. For $x\in\mathbb{R}^n$ and $r>0$, $\overline{B}(x;r)$, $B(x;r)$, and $\mathbb{S}(x;r)$ denote the closed ball, the open ball, and the sphere centered at $x$ with radius $r$, respectively.  For a set $A\subset\mathbb{R}^n$, the closure, interior, boundary, and complement of $A$ are denoted by $\clo A$, $\inte A$,
$\bdry A$, and $A^c$, respectively. Moreover, we set
\[
A':=(\inte A)^c=\clo(A^c).
\]

The distance from a point $x$ to a nonempty and closed set $A\subset\mathbb{R}^n$
is denoted by
\[
d(x,A):=\inf_{a\in A}\|x-a\|.
\]
We denote by $\operatorname{proj}_A(x)$ the set of points in $A$ closest to $x$,
that is,
\[
\operatorname{proj}_A(x):=\{a\in A:\|x-a\|=d(x,A)\}.
\]
In the planar case, for two nonzero vectors $u,v\in\mathbb{R}^2$, we denote by
$\angle(u,v)\in[0,2\pi[$ the unique counterclockwise oriented angle from $u$
to $v$. 
For three distinct points $a,b,c\in\mathbb{R}^2$, we denote by $\angle abc$
the angle at $b$ from $a$ to $c$, that is,
\[
\angle abc := \angle(a-b,\,c-b).
\]

We now recall several notions from nonsmooth analysis that will be used
throughout the paper. Comprehensive accounts of these concepts can be found in
the monographs \cite{clsw,mord,penot,rockwet,thibault}. Let $A\subset\mathbb{R}^n$
be a nonempty and closed set and let $a\in\bdry A$.
The proximal normal cone to $A$ at $a$ is defined by
\[
N_A^P(a):=\left\{\zeta\in\mathbb{R}^n:\;
\exists \sigma>0 \text{ such that }
\langle \zeta, x-a\rangle \le \sigma \|x-a\|^2,\;\; \forall x\in A \right\}.
\]
A nonzero vector $\zeta\in N_A^P(a)$ is said to be realized by an $r$-sphere if
\[
\left\langle \frac{\zeta}{\|\zeta\|},x-a\right\rangle
\le \frac{1}{2r}\|x-a\|^2,\;\; \forall x\in A,
\]
or equivalently if
\[
B\left(a+r\frac{\zeta}{\|\zeta\|};r\right)\cap A=\emptyset.
\]
Note that for $y\not\in A$, if $a\in\proj_A(y)=\proj_{\bdry A}(y)$ then $y-a\in N_A^P(a)$ and is realized by a $\|y-a\|$-sphere, that is, \[
\left\langle y-a,x-a\right\rangle
\le \frac{1}{2}\|x-a\|^2,\;\; \forall x\in A.
\]
A boundary point $a$ of $A$ is said to be {\it regular} if the proximal
normal cone $N_A^P(a)$ is a half-line, that is,  there exists a unit vector $\xi_a\in \R^n$ such that $$N_A(a)=\{\l\xi_a : \l\geq 0\}. $$  
Recall that a nonempty and closed set $A\subset\mathbb{R}^n$ satisfies the
interior $r$-sphere condition, for some $r>0$, if for every $a\in\bdry A$ there
exists $x\in A$ such that
\[
a\in\overline B(x;r)\subset A.
\]
The following elementary observation will be used repeatedly in the sequel.

\begin{lemma}\label{lem0}
Let $A\subset\mathbb{R}^n$ be nonempty and closed and let $r>0$. Then $A$ satisfies
the interior $r$-sphere condition if and only if $A$ is regular closed and for
any $a\in\bdry A$, there exists at least one unit proximal normal
$\zeta\in N_{A'}^P(a)$ which is realized by an $r$-sphere.
\end{lemma}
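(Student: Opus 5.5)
The equivalence is proved by unwinding the definitions in both directions. The basic tool is the observation that for $a\in\bdry A$ and a unit vector $\zeta$, the closed ball $\overline B(a-r\zeta;r)$ touches $a$. This ball lies in $A$ exactly when the open ball $B(a-r\zeta;r)$ misses $A'$, modulo a closure argument. By the equivalent formulation of ``realized by an $r$-sphere'' recalled above, the latter says that $-\zeta\in N^P_{A'}(a)$ and is realized by an $r$-sphere. We also use that $\bdry A=\bdry A'$ when $A$ is regular closed, so that $a$ is a boundary point of $A'$ and the normal cone $N^P_{A'}(a)$ makes sense.

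\emph{Forward direction.} Assume the interior $r$-sphere condition. First, $A$ is regular closed. Given $a\in A$, either $a\in\inte A$, or $a\in\bdry A$ and $a\in\overline B(x;r)\subset A$. In the second case $a$ is a limit of points of $B(x;r)\subset\inte A$. Hence $A\subset\clo(\inte A)$, and the reverse inclusion holds because $A$ is closed. Next, fix $a\in\bdry A$ and the corresponding $x$, so $\|x-a\|=r$. Put $\zeta:=(x-a)/r$. Then $B(a+r\zeta;r)=B(x;r)\subset\inte A$, so this ball is disjoint from $A'=(\inte A)^c$. Finally $a\in\bdry A'$: indeed $a\notin\inte A$, and every neighborhood of $a$ meets $\inte A$. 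Thus $\zeta\in N^P_{A'}(a)$ is realized by an $r$-sphere.

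\emph{Converse.} Assume $A$ is regular closed, take $a\in\bdry A$, and let $\zeta$ be a unit vector in $N^P_{A'}(a)$ realized by an $r$-sphere. Set $x:=a+r\zeta$. Then $B(x;r)\cap A'=\emptyset$, so $B(x;r)\subset\inte A\subset A$. Since $A$ is closed, $\overline B(x;r)=\clo B(x;r)\subset A$, and $a\in\mathbb S(x;r)\subset\overline B(x;r)$.

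The only delicate point is the bookkeeping of boundaries. One must check $a\in\bdry A'$ so that the normal cone to $A'$ at $a$ is taken at a boundary point. In the converse, regular closedness is needed for $\bdry A=\bdry A'$, so that the hypothesis applies at every $a\in\bdry A$. The rest is immediate.
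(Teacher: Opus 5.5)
Your proof is correct and follows the same route as the paper. In the forward direction you get regular closedness from $a\in\overline B(x;r)\subset A$ and take $\zeta=(x-a)/r$ as a normal to $A'$ realized by an $r$-sphere; in the converse you pass from $B(a+r\zeta;r)\cap A'=\emptyset$ to $\overline B(a+r\zeta;r)\subset A$ by closedness. You are slightly more explicit than the paper (you state $\|x-a\|=r$ and check $a\in\bdry A'$). The one blemish is the unused $\overline B(a-r\zeta;r)$, $-\zeta$ convention in your opening paragraph, which conflicts with the $a+r\zeta$ convention you then correctly use.
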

\begin{proof}
Assume first that $A$ satisfies the interior $r$-sphere condition. Then for
every $a\in\bdry A$ there exists $x\in A$ such that
$a\in\overline B(x;r)\subset A$, which implies that $a\in\clo(\inte A)$.
Therefore $\bdry A\subset \clo(\inte A)$ and hence $A=\clo(\inte A)$, that is,
$A$ is regular closed. Moreover, the existence of such a ball immediately
yields a unit proximal normal to $A'$ at $a$ realized by an $r$-sphere.

Conversely, if $A$ is regular closed and for every $a\in\bdry A$ there exists a
unit proximal normal $\zeta\in N_{A'}^P(a)$ realized by an $r$-sphere, then by
definition
\[
B(a+r\zeta;r)\cap A'=\emptyset,
\]
which implies $\overline B(a+r\zeta;r)\subset A$, and hence $A$ satisfies the
interior $r$-sphere condition.
\end{proof}

\section{Proof of  Theorem~\ref{mainthm}} \label{proof}  Before presenting the details of the proof, we establish two preliminary
results. We begin with the following planar geometric lemma, which will be used in the final step of the argument.

\begin{lemma} \label{geolem} Let $r>0$, $r_0>0$, $\a\in]0,\pi[$, and $\b\in [0,\pi[$ such that $$0\leq \b\leq\a\leq \pi-\b\;\;\;\hbox{and}\;\;\;r_0\sin\a\geq r\sin\b\geq0.$$
We consider in $\R^2$ the three points $$A:=(r_0\cos\a,r_0\sin\a),\;\;B:=( r\cos\b,r\sin\b),\,\,\hbox{and}\;\;C:=( -r\cos\b,r\sin\b).$$
Denote by $S_A$, $S_B$, and $S_C$ the circles centered at $A$, $B$, and $C$
with radii $r_0$, $r$, and $r$, respectively. Let $O$ denote the origin and define the points $D$, $E$, and $F$ by $$S_A\cap S_C=\{O,D\},\;\;S_A\cap S_B=\{O,E\},\;\;\hbox{and}\;\;S_A\cap y\hbox{-axis}=\{O,F\}.$$
The following assertions hold:
\begin{enumerate}[$(i)$]
\item $\angle EOD=\pi- \angle CAB.$
\item If $r_0< \frac{r}{\sqrt{3}}$ then:
\begin{enumerate}[$(a)$] 
\item $\angle EOD<\frac{\pi}{3}$.
\item For $M$ and $N$ two points on the circle $S_A$ that lie outside both circles $S_B$ and $S_C$, we have  $\angle NOM<\frac{\pi}{3}$.
\end{enumerate}
\end{enumerate}
\end{lemma}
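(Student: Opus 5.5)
The plan rests on one planar fact: if two circles both pass through $O$, their second intersection point is the reflection of $O$ across the line joining their centres. So $D$ is the reflection of $O$ across line $AC$, $E$ is its reflection across line $AB$, and $F$ is its reflection across the vertical line through $A$. Hence $\overrightarrow{OD}$ is perpendicular to $AC$, pointing from $O$ toward that line, and $\overrightarrow{OE}$ is perpendicular to $AB$ in the same way.

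For $(i)$, let $P$ and $Q$ be the feet of the perpendiculars from $O$ to lines $AB$ and $AC$. The quadrilateral $O\,P\,A\,Q$ has right angles at $P$ and $Q$. Provided it is convex with the correct orientation, the angle at $O$ is $\pi-\angle CAB$, and this angle equals $\angle EOD$. The hypotheses are what make the configuration non-degenerate and correctly oriented:
\begin{itemize}
\item $0\le\beta\le\alpha\le\pi-\beta$ says that the direction of $A$ lies between the directions of $B$ and $C$, so $O$ and $A$ sit on the appropriate sides of the relevant lines;
\item $r_0\sin\alpha\ge r\sin\beta$ says that $A$ lies on or above line $BC$.
\end{itemize}
I expect checking these orientations carefully, including the borderline cases where $A$ lies on line $BC$ or $\beta=0$, to be the fiddly part. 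I would handle it with explicit sign computations of cross products, for instance checking that $O$ and $C$ lie on opposite sides of line $AB$.

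For $(ii)(a)$, by $(i)$ it suffices to show $\angle CAB>\frac{2\pi}{3}$. Since $\beta\le\pi-\beta$ we have $\beta\le\frac{\pi}{2}$, so $BC$ is a horizontal segment of half-length $r\cos\beta$ at height $r\sin\beta$. The point $A$ lies in the closed disc of radius $r_0$ about $O$ and on or above the line $BC$. I would argue that the angle subtended by $BC$ is smallest when $A$ is as far as possible from the segment, namely at $A=(0,r_0)$; the level sets of the subtended angle are circular arcs through $B$ and $C$. This extremal claim is the main obstacle. I would try to justify it either by the arc-locus argument, or by a direct monotonicity argument in $\alpha$ using symmetry about the $y$-axis. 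At $A=(0,r_0)$ we get
\[
\tan\tfrac12\angle CAB=\frac{r\cos\beta}{r_0-r\sin\beta}\ \ge\ \frac{r}{r_0}>\sqrt3 .
\]
The middle inequality follows from $r_0(1-\cos\beta)\le r\sin\beta$, which holds because $r_0<r$ and $1-\cos\beta\le\sin\beta$ on $[0,\frac{\pi}{2}]$. If $r_0-r\sin\beta\le 0$, the angle is at least $\pi$ trivially. Hence $\angle CAB>\frac{2\pi}{3}$ and $\angle EOD<\frac{\pi}{3}$.

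For $(ii)(b)$, the part of $S_A$ lying outside both discs bounded by $S_B$ and $S_C$ is contained in the arc of $S_A$ from $E$ to $D$ that does not contain $O$. This is because $S_A$ enters the disc bounded by $S_B$ exactly at $O$ and $E$, and enters the disc bounded by $S_C$ exactly at $O$ and $D$. For $M,N$ on that arc, the inscribed angle theorem gives $\angle NOM=\frac12\,\mathrm{arc}(MN)\le\frac12\,\mathrm{arc}(ED)=\angle EOD$. Part $(a)$ then gives $\angle NOM<\frac{\pi}{3}$.
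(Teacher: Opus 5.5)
Your route for $(ii)(a)$ differs from the paper's and can be completed. As written, though, two steps fail: the convexity claim in $(i)$ is false, and the extremal claim in $(ii)(a)$ is never proved.

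\textbf{Part $(i)$.} The hypotheses do \emph{not} make $OPAQ$ convex. The foot $P$ lies on the ray from $A$ through $B$ only if $\langle O-A,B-A\rangle\ge 0$, i.e.\ $\cos(\alpha-\beta)\le r_0/r$. Take $\beta=0$, $r_0=r/2$, $\alpha=\pi/18$, which is admissible even with $r_0<r/\sqrt3$. Then $P$ lies on the extension of $BA$ beyond $A$, so $\angle PAQ=\pi-\angle CAB$, and $OPAQ$ is self-intersecting. The convex angle sum would give $\angle EOD=\angle CAB\approx 167^\circ$, whereas in fact $\angle EOD\approx 13^\circ=\pi-\angle CAB$.

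Your sample orientation check is also backwards: $O$ and $C$ lie on the \emph{same} side of line $AB$. What you actually need is that $O$ lies in the angular sector of $\angle BAC$. The segment $[O,A]$ meets line $BC$ at $\frac{r\sin\beta}{r_0\sin\alpha}A$, which is a point of $[O,A]$ because $r_0\sin\alpha\ge r\sin\beta$. Its abscissa $r\sin\beta\cot\alpha$ has modulus at most $r\cos\beta$ because $\beta\le\alpha\le\pi-\beta$. Hence $\overrightarrow{OE}$ and $\overrightarrow{OD}$ are the outward normals to the two sides of that sector, and the outward normals of a sector of aperture $\theta$ make the angle $\pi-\theta$. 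No quadrilateral is needed.

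The paper instead splits $\angle EOD$ at $F$. It uses $OE\perp AB$, $OF\perp BC$, $OD\perp AC$ to get $\angle EOF=\angle ABC$ and $\angle FOD=\angle BCA$, then the angle sum of triangle $ABC$. Note that $F=(0,2r_0\sin\alpha)$ is the reflection of $O$ in the \emph{horizontal} line through $A$, not the vertical one.

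\textbf{Part $(ii)(a)$.} Your extremal claim carries the whole argument. It is true, and it has a short proof. If $r_0=r\sin\beta$, the only admissible point is $A=(0,r_0)$, which lies on segment $BC$, so $\angle CAB=\pi$. Otherwise $r_0>r\sin\beta$. The circle $\Gamma$ through $B$, $C$ and $A^*:=(0,r_0)$ then has centre $(0,c)$ with $c=\frac{r_0^2-r^2}{2(r_0-r\sin\beta)}<0$. For every $A=(x,y)$ with $x^2+y^2=r_0^2$ one finds $x^2+(y-c)^2-(r_0-c)^2=2c(r_0-y)\le 0$. So every admissible $A$ lies in the closed disc of $\Gamma$ and not below line $BC$. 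Hence $\angle CAB\ge\angle CA^*B$, and your half-angle computation finishes.

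The paper needs no extremal step. It applies the law of cosines in triangle $ABC$ at an arbitrary admissible $A$. Using $r_0\sin\alpha\ge r\sin\beta$ and $AB\cdot AC\le r^2+r_0^2-2rr_0\sin\alpha\sin\beta\le r^2+r_0^2$, it gets $\cos\angle CAB\le\frac{r_0^2-r^2}{r_0^2+r^2}$, i.e.\ $\angle CAB\ge 2\arctan(r/r_0)$. This is exactly the bound your inequality $\tan\tfrac12\angle CAB\ge r/r_0$ gives. The paper's version is shorter and uniform in $A$; yours also identifies the worst position and the exact minimum $2\arctan\frac{r\cos\beta}{r_0-r\sin\beta}$.

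\textbf{Part $(ii)(b)$.} This is a more detailed version of the paper's one-line step and is fine. You should add that the arcs of $S_A$ inside the two discs leave $O$ in opposite directions; this is where $\beta\le\alpha\le\pi-\beta$ is used.
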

\begin{proof} The geometric configuration is shown in Figure~\ref{Fig1}.
\begin{figure}[h]
\centering
\includegraphics[scale=0.4]{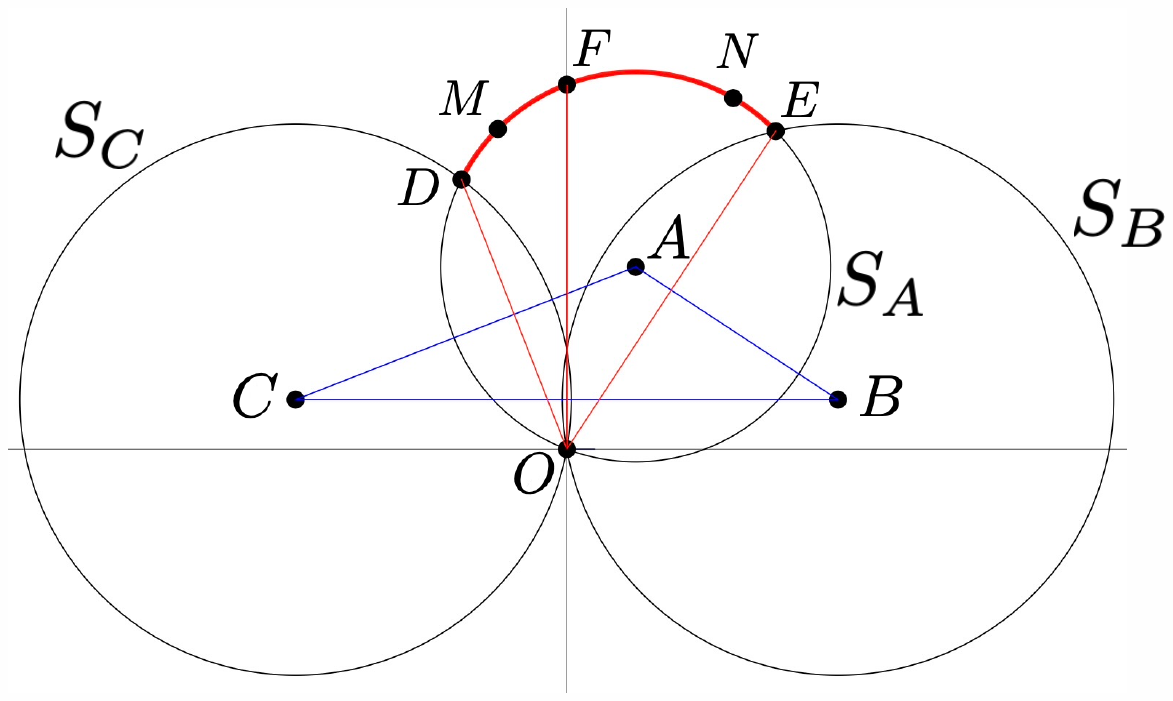}
\caption{\label{Fig1} Geometric configuration of Lemma~\ref{geolem}}
\end{figure}

$(i)$: $\angle EOD=\angle EOF+\angle FOD=\angle ABC+\angle BCA=\pi-\angle CAB.$\vspace{0.2cm}\\
$(ii)(a)$: Applying the law of cosines in triangle $ABC$, we obtain
\[
BC^2 = AB^2 + AC^2 - 2AB\,AC\cos(\angle CAB).
\]
We compute each term. First,
\[
BC^2 = 4r^2\cos^2\beta = 4r^2 - 4r^2\sin^2\beta.
\]
Next,
\begin{eqnarray*}
AB^2
&=& (r\cos\beta - r_0\cos\alpha)^2 + (r\sin\beta - r_0\sin\alpha)^2 \\
&=& r^2 + r_0^2 - 2rr_0\sin\beta\sin\alpha - 2rr_0\cos\beta\cos\alpha,
\end{eqnarray*}
and
\begin{eqnarray*}
AC^2
&=& (-r\cos\beta - r_0\cos\alpha)^2 + (r\sin\beta - r_0\sin\alpha)^2 \\
&=& r^2 + r_0^2 - 2rr_0\sin\beta\sin\alpha + 2rr_0\cos\beta\cos\alpha.
\end{eqnarray*}
Hence,
\begin{eqnarray*}
-\cos(\angle CAB)
&=& \frac{BC^2 - AB^2 - AC^2}{2AB.AC} \\ &=&
\frac{2r^2 - 2r_0^2 + 4rr_0\sin\beta\sin\alpha - 4r^2\sin^2\beta}
{2\sqrt{(r^2 + r_0^2 - 2rr_0\sin\beta\sin\alpha)^2
       - (2rr_0\cos\beta\cos\alpha)^2}}\\&\geq& \frac{2r^2 - 2r_0^2 + 4rr_0\sin\beta\sin\alpha - 4r^2\sin^2\beta}
{2\sqrt{(r^2 + r_0^2 - 2rr_0\sin\beta\sin\alpha)^2}}\\&=& \frac{2r^2 - 2r_0^2 + 4rr_0\sin\beta\sin\alpha - 4r^2\sin^2\beta}
{2(r^2 + r_0^2 - 2rr_0\sin\beta\sin\alpha)}.
\end{eqnarray*}
Since $r_0\sin\alpha \ge r\sin\beta$, we obtain
\[
2r^2 - 2r_0^2 + 4r\sin\beta(r_0\sin\alpha - r\sin\beta)\ge 2r^2 - 2r_0^2,
\]
and therefore,
\[
-\cos(\angle CAB)
\ge
\frac{2r^2 - 2r_0^2}
{2(r^2 + r_0^2 - 2rr_0\sin\beta\sin\alpha)}
\ge
\frac{r^2 - r_0^2}{r^2 + r_0^2}.
\]
Hence,
\[
\cos(\angle CAB)\le \frac{r_0^2-r^2}{r_0^2+r^2}
=1-\frac{2r^2}{r_0^2+r^2}.
\]
If $r_0< \dfrac{r}{\sqrt{3}}$, then
\[
\cos(\angle CAB)
<1-\frac{2r^2}{\frac{r^2}{3}+r^2}
=1-\frac{2}{\frac{1}{3}+1}
=-\frac12,
\]
so $\angle CAB>\dfrac{2\pi}{3}$. This yields, using $(i)$, that $\angle EOD=\pi-\angle CAB<\dfrac{\pi}{3}$.\vspace{0.2cm}\\
$(ii)(b)$: $\angle NOM\leq \angle EOD<\dfrac{\pi}{3}$.
\end{proof}

The next lemma will be used repeatedly in the proof of Theorem~\ref{mainthm}.

\begin{lemma}\label{secondlem} Let $S\subset\mathbb{R}^2$ be a nonempty and closed set satisfying the interior $r$-sphere condition for some $r>0$. For $r^*\in ]0,r[$, let $S_{r^*}$ denote the union of all the closed balls of radius $r^*$ contained in $S$. Let $s_0\in \bdry S$ and let  $x_0\in S\cap (S_{r^*})^c$. Set $r_0:=\|x_0-s_0\|$ and $\zeta:=\frac{x_0-s_0}{\|x_0-s_0\|}$.\footnote{Note that $x_0\not=s_0$ since $\bdry S\subset S_{r^*}$} Assume $\zeta_{s_0}\in N_{S'}^P(s_0)$ is a unit normal vector to $S'$ at $s_0$ realized by an $r'$-sphere for some $r'<r^*$ and that $x_0\in \overline{B}(s_0+r'\zeta_{s_0},r')\subset S$. Then $s_0$ is not regular, and there exist a unit vector $u_x$ and  two distinct unit normal vectors $\zeta_0$ and $\xi_0$ to $S'$ at $s_0$, realized by an $r$-sphere, such that:
\begin{enumerate}[$(i)$]
\item $\pi\leq \angle(\zeta_0,\xi_0)\leq 2\pi-2\cos^{-1}\!\left(\frac{r_0}{2r}\right)$.
\item $\cos^{-1}\!\left(\frac{r_0}{2r}\right)\le \angle(\zeta,\zeta_0)\leq\angle(\zeta,u_x) - \frac{\pi}{2}$.
\item $\angle(\zeta,u_x) +\frac{\pi}{2}\leq\angle(\zeta,\xi_0)\leq 2\pi-\cos^{-1}\!\left(\frac{r_0}{2r}\right)$.
\end{enumerate}
\end{lemma}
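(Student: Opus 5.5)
The plan is to study the set
\[
K:=\{\theta\in N_{S'}^P(s_0):\ \|\theta\|=1,\ \theta \hbox{ realized by an } r\hbox{-sphere}\},
\]
which is nonempty by Lemma~\ref{lem0}. Set $c:=\cos^{-1}\!\left(\frac{r_0}{2r}\right)$. I will show that $K$ avoids the open arc of half-width $c$ around $\zeta$, and that ordering $K$ counterclockwise from $\zeta$ produces $\zeta_0$ and $\xi_0$.

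\emph{Step 1 (exclusion of $K$ from an arc).} Let $\theta\in K$, so $\overline B(s_0+r\theta;r)\subset S$. If $x_0$ belonged to this ball, then, since $r^*<r$, some closed ball of radius $r^*$ inside $\overline B(s_0+r\theta;r)$ would contain $x_0$. That would give $x_0\in S_{r^*}$, a contradiction. Hence $\|x_0-s_0-r\theta\|>r$. Expanding the square gives $r_0^2-2rr_0\langle\zeta,\theta\rangle>0$, that is $\langle\zeta,\theta\rangle<\frac{r_0}{2r}$. So every $\theta\in K$ satisfies $c<\angle(\zeta,\theta)<2\pi-c$.

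\emph{Step 2 (non-regularity).} Suppose $s_0$ were regular. Then $N_{S'}^P(s_0)$ is a half-line, so $K=\{\zeta_{s_0}\}$. The ball $\overline B(s_0+r\zeta_{s_0};r)$ contains $\overline B(s_0+r'\zeta_{s_0};r')$, because $r'<r$ and the two balls are tangent at $s_0$. Hence it contains $x_0$, which contradicts Step 1. Note also that $x_0\in\overline B(s_0+r'\zeta_{s_0};r')$ gives $\langle\zeta,\zeta_{s_0}\rangle\ge\frac{r_0}{2r'}>\frac{r_0}{2r}$. Thus $\zeta_{s_0}$ lies strictly inside the forbidden arc around $\zeta$.

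\emph{Step 3 (choice of $\zeta_0,\xi_0,u_x$).} $K$ is closed, since the realization inequality passes to limits. Let $\zeta_0\in K$ minimize $\angle(\zeta,\cdot)$ over $K$, and let $\xi_0\in K$ maximize it. Step 1 gives the lower bound in (ii), the upper bound in (iii), and $\angle(\zeta_0,\xi_0)\le 2\pi-2c$. All of $K$ lies on the counterclockwise arc from $\zeta_0$ to $\xi_0$, and this arc avoids $\zeta$. The key claim is that this arc has length at least $\pi$. Granting it, I take $u_x$ to be the unit vector bisecting this arc, so that $\angle(\zeta,u_x)=\frac12\bigl(\angle(\zeta,\zeta_0)+\angle(\zeta,\xi_0)\bigr)$. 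Both angular distances from $u_x$ to $\zeta_0$ and to $\xi_0$ are then at least $\frac{\pi}{2}$, which gives the remaining inequalities in (ii) and (iii) and the lower bound in (i). The claim also forces $\zeta_0\neq\xi_0$.

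\emph{Main obstacle: the claim $\angle(\zeta_0,\xi_0)\ge\pi$.} Here I would use the structural fact that $N_{S'}^P(s_0)$ is generated by $K$. More precisely, $N_{S'}^P(s_0)$ is a closed convex cone, and I claim its extreme (boundary) rays are realized by $r$-spheres. To prove this, I would approximate an extreme ray by limiting normals. These are limits of unit normals at nearby regular boundary points. At a regular point the unique unit normal must be the one given by the interior $r$-sphere condition, so it is realized by an $r$-sphere, and this property passes to the limit.

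Granting the fact, suppose the arc from $\zeta_0$ to $\xi_0$ had length $<\pi$. Then the convex cone generated by $K$ would be contained in the sector spanned by that arc. Since this sector misses the arc of half-width $c$ around $\zeta$, it could not contain $\zeta_{s_0}$, which contradicts $\zeta_{s_0}\in N_{S'}^P(s_0)$. If the fact about extreme rays is hard to establish in full, I would fall back on the Clarke normal cone. It contains the proximal cone and equals the closed convex hull of limiting normals, and I would run the same argument with limiting normals in place of extreme rays.
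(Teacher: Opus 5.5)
Your Steps 1 and 2 are correct. Step 2 is in fact a shorter route to non-regularity than the paper's, which only obtains it at the end from $\zeta_0\neq\xi_0$. Defining $\zeta_0,\xi_0$ as the extremal elements of the closed set $K$, with $u_x$ the bisector, is also legitimate, because the statement asks only for \emph{some} unit vector $u_x$.

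Everything then rests on $\angle(\zeta_0,\xi_0)\ge\pi$, and this is where there is a genuine gap. You justify it by saying that limiting normals at $s_0$ are limits of normals at nearby regular points, and hence are realized by $r$-spheres. That is false in exactly this setting. $\zeta_{s_0}$ is itself a limiting normal at $s_0$ (take the constant sequence), yet your Step 1 gives $\langle\zeta,\zeta_{s_0}\rangle>\frac{r_0}{2r}$, so $\zeta_{s_0}\notin K$. The Clarke-cone fallback fails for the same reason: the limiting normals include directions outside $K$, so their closed convex hull says nothing about where $K$ lies. What you would need is an argument that genuinely uses the \emph{extremality} of a ray, and the sketch has none. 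You also take closedness of $N^P_{S'}(s_0)$ for granted, which fails for general closed sets.

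The missing idea is a mechanism that produces elements of $K$ at the edges of the normal cone. The paper does this as follows.
\begin{itemize}
\item It takes points $x_n\in S^c$ with $x_n\to s_0$ and sweeps the circle of radius $r_n=\|x_n-s_0\|$ about $s_0$, from $x_n$ toward $s_0+r_n\zeta\in\inte S$, in both directions. This gives the first boundary points $s_n,s'_n$.
\item At $s_n,s'_n$ it takes normals realized by $r$-spheres and passes to the limit.
\item Testing the $r$-sphere inequality at $s_n$ against $s_0\in S'$, and the limiting inequality at $s_0$ against $s_n\in S'$, yields $\langle u_s,\zeta_0\rangle=0$.
\item The exterior arc fixes the sign, so $\zeta_0$ is $u_s$ rotated clockwise by $\frac{\pi}{2}$. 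Symmetrically, $\xi_0$ is $u_{s'}$ rotated counterclockwise by $\frac{\pi}{2}$.
\end{itemize}
Since $u_s$, $u_x$, $u_{s'}$ occur in this order counterclockwise from $\zeta$, the bound $\angle(\zeta_0,\xi_0)\ge\pi$ follows at once.

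Your structural claim could likely be proved with the same two-sided inequality, applied to boundary points approaching $s_0$ along an extreme direction of the contingent cone of $S'$ at $s_0$. That argument is the heart of the lemma, however, and it is absent from your proposal.
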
 

\begin{proof} Since $s_0\in\bdry S$, we have $\overline{B}\!\left(s_0,\frac{1}{n}\right)\not\subset S$ for all $n$. Let $x_n \in \overline{B}\!\left(s_0,\frac{1}{n}\right)\cap S^c$ and set $r_n:=\|x_n-s_0\|\le \frac{1}{n}< r_0$ for $n$ sufficiently large.
For $y_n:=s_0+r_n\zeta$ so that $\|y_n-s_0\|=r_n\le r_0$, we have for $n$ sufficiently large 
\[
\|y_n-s_0\|=r_n <r_0.
\]
This yields that for $n$ sufficiently large $$y_n\in ]s_0,x_0[\subset B(s_0+r'\zeta_{s_0};r')\subset \overline{B}(s_0+r'\zeta_{s_0};r')\subset S.$$
Hence for $n$ sufficiently large, $y_n\in\inte S$. Let $C_n$ be the circle centered at $s_0$ with radius $r_n$. The clockwise arc of $C_n$ joining $x_n\notin S$ to $y_n\in\inte S$ intersects $\bdry S$ at a point $s_n$, and the counterclockwise arc 
of $C_n$ joining $x_n$ to $y_n$ intersects $\bdry S$ at a point $s_n'$. Clearly, the open arc of $C_n$ joining $s_n$ to $s_n'$ and containing $x_n$ is contained in $S^c$. Define the unit vectors $u_{x_n}:=\frac{x_n-s_0}{\|x_n-s_0\|}$, $
u_{s_n}:=\frac{s_n-s_0}{\|s_n-s_0\|}$, and $u_{s_n'}:=\frac{s_n'-s_0}{\|s_n'-s_0\|}$, and consider $\zeta_n$ and $\xi_n$ the unit normal vectors to $S'$ at $s_n$ and $s_n'$, respectively, realized by an $r$-sphere, see Figure \ref{Fig2}.
\begin{figure}[h!]
\centering
\includegraphics[scale=0.25]{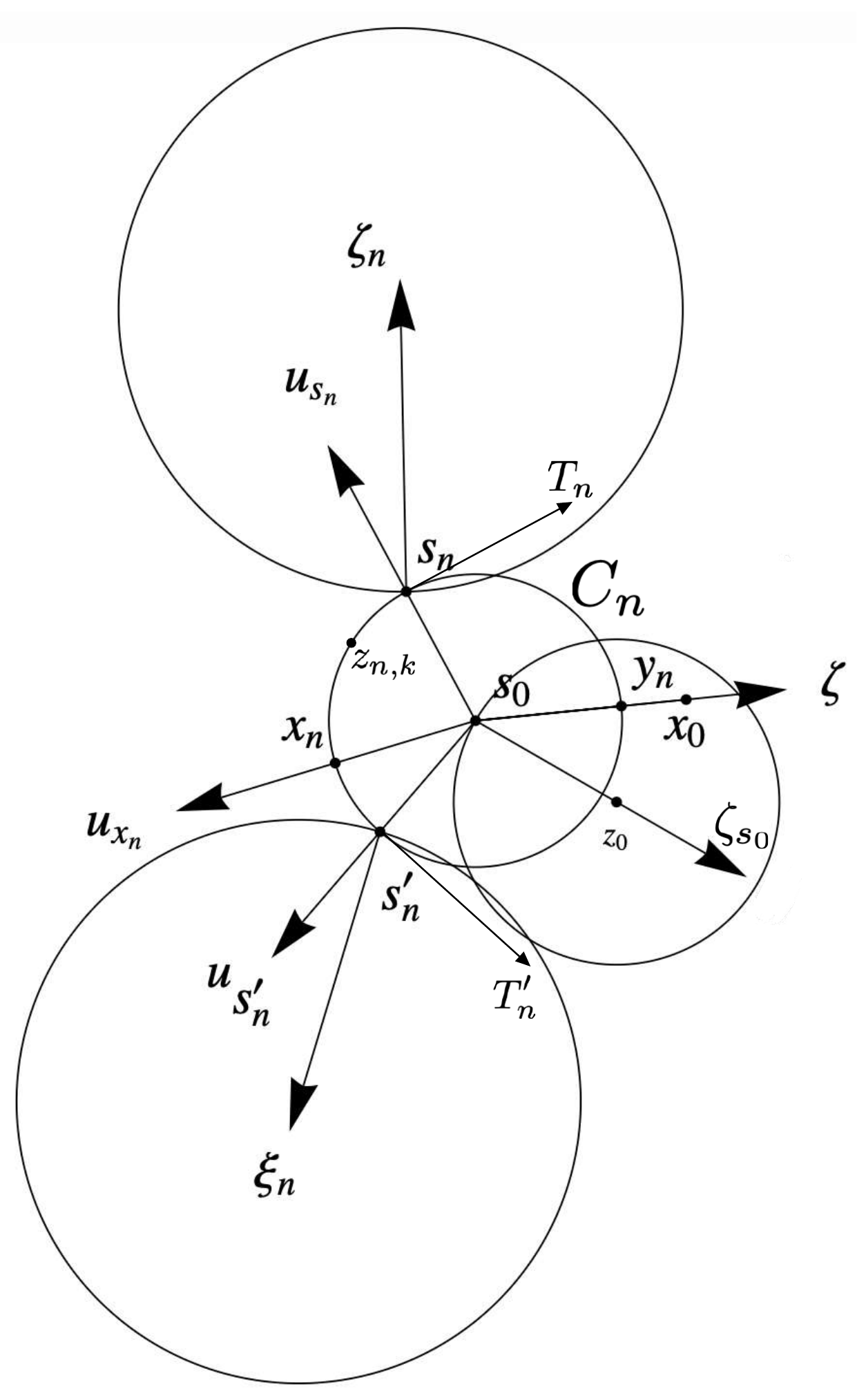}\;\;\;\;
\includegraphics[scale=1]{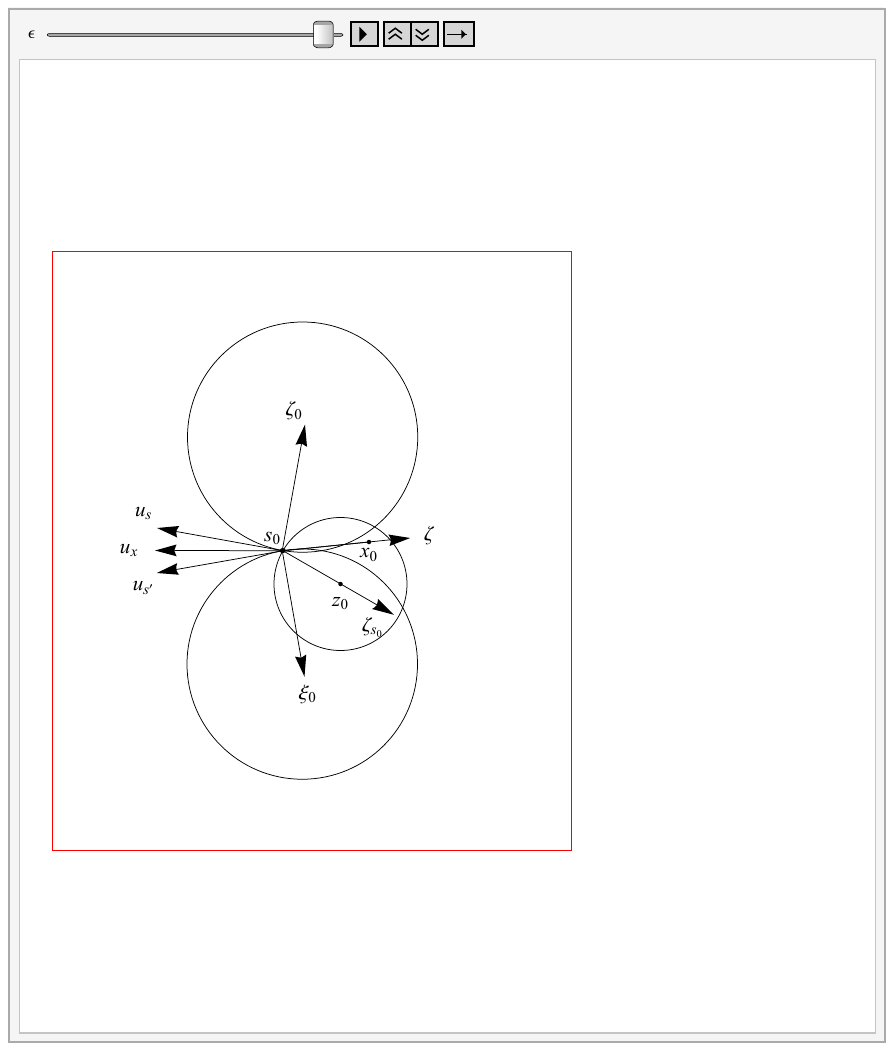}
\caption{\label{Fig2} Proof of Lemma~\ref{secondlem}}
\end{figure}
Since all these vectors are unit vectors, there exist subsequences, we do not relabel,  such that
\[
u_{x_n}\f u_x\;\hbox{unit},\;
u_{s_n}\f u_s\;\hbox{unit},\;
u_{s_n'}\f u_{s'}\;\hbox{unit},\;
\zeta_n\f \zeta_0\;\hbox{unit},\;\hbox{and}\;
\xi_n\f \xi_0 \;\hbox{unit}.
\]
Clearly,
\[
0\le \angle(\zeta,u_{s_n})< \angle(\zeta,u_{x_n})
< \angle(\zeta,u_{s_n'})< 2\pi,
\]
hence,
\begin{equation}\label{eq0}
0\le \angle(\zeta,u_s)\le \angle(\zeta,u_x)
\le \angle(\zeta,u_{s'})< 2\pi.
\end{equation}
Since $\zeta_n\in N_{S'}^P(s_n)$ is realized by an $r$-sphere we have $$\<\zeta_n,s-s_n\> \leq \frac{1}{2r}\|s-s_n\|^2,\;\;\forall s\in S'.$$
Taking $n\f\infty$ in this latter and using that $\|s_0-s_n\|=r_n\leq \frac{1}{n}$, we deduce that $$\<\zeta_0,s-s_0\> \leq \frac{1}{2r}\|s-s_0\|^2,\;\;\forall s\in S'.$$
This yields that $\zeta_0\in N_{S'}^P(s_0)$ is realized by an $r$-sphere. Similarly,  $\xi_0\in N_{S'}^P(s_0)$ is also realized by an $r$-sphere. Since $x_0\not\in S_{r^*}$, $r^*<r$, and $\overline{B}(s_0+r\zeta_0;r)\subset S$, we deduce that  $x_0\not \in \overline{B}(s_0+r\zeta_0;r)$. This latter gives  that $$\<\zeta,\zeta_0\><\frac{r_0}{2r}, $$
and hence \begin{equation}\label{eq1} \cos^{-1}\!\left(\frac{r_0}{2r}\right)
< \angle(\zeta,\zeta_0)
< 2\pi - \cos^{-1}\!\left(\frac{r_0}{2r}\right). \end{equation}
Similarly, $$\<\zeta,\xi_0\><\frac{r_0}{2r},$$ and hence \begin{equation}\label{eq2} \cos^{-1}\!\left(\frac{r_0}{2r}\right)
< \angle(\zeta,\xi_0)
< 2\pi - \cos^{-1}\!\left(\frac{r_0}{2r}\right). \end{equation}
From Lemma \ref{lem0}, we know that $S$ is regular closed, and hence $s_0\in\bdry S=\bdry S'\subset S'$. Add to this that $\zeta_n\in N_{S'}^P(s_n)$ is realized by an $r$-sphere, we deduce that $$\<\zeta_n,s_0-s_n\>\leq \frac{1}{2r}\|s_0-s_n\|^2,$$
and hence
\[
\langle -u_{s_n},\zeta_n\rangle\le \frac{r_n}{2r}.
\]
Passing to the limit gives
\[
\langle u_s,\zeta_0\rangle\ge 0.
\]
On the other hand, having $s_n\in\bdry S=\bdry S'\subset S'$ and $\zeta_0\in N_{S'}^P(s_0)$ is realized by an $r$-sphere, we get that $$\<\zeta_0,s_n-s_0\>\leq \frac{1}{2r}\|s_n-s_0\|^2,$$
and hence 
\[
\langle u_{s_n},\zeta_0\rangle\le \frac{r_n}{2r}.
\]
Passing to the limit gives
\[
\langle u_s,\zeta_0\rangle\le 0.
\]
Therefore, \begin{equation}\label{eq3}\<u_s,\zeta_0\>=0.\end{equation}
Similarly, \begin{equation*}\label{eq4}\<u_{s'},\xi_0\>=0.\end{equation*}
Fix $n$ sufficiently large. Since the open arc of $C_n$ joining $s_n$ to $s_n'$ and containing $x_n$
is contained in $S^c$, we may choose a sequence $(z_{n,k})_{k\ge 1}$ of points on this
arc such that
\[
z_{n,k}\to s_n\;\;\text{as }k\to\infty,\;\;\text{and}\;\;
z_{n,k}\notin S\;\;\hbox{for all}\;\;k.
\]
Set
\[
w_{n,k}:=\frac{z_{n,k}-s_n}{\|z_{n,k}-s_n\|}.
\]
Then $w_{n,k}\f -T_n$ as $k\f\infty$, where $T_n$ denotes the clockwise unit tangent to $C_n$
at $s_n$. Since $z_{n,k}\in S^c\subset S'$ and $\zeta_n\in N^P_{S'}(s_n)$ is realized by an $r$-sphere, we have
\[
\langle \zeta_n, z_{n,k}-s_n\rangle \le \frac{1}{2r}\|z_{n,k}-s_n\|^2.
\]
Dividing by $\|z_{n,k}-s_n\|>0$, we obtain
\[
\Big\langle \zeta_n, w_{n,k}\Big\rangle \le \frac{1}{2r}\|z_{n,k}-s_n\|.
\]
Letting $k\f\infty$ yields
\begin{equation*}
\langle \zeta_n,-T_n\rangle \le 0,
\;\;\text{that is,}\;\;
\langle \zeta_n,T_n\rangle \ge 0.
\end{equation*}
Assuming, up to extracting a further subsequence, that $T_n\to T_0$ unit as $n\f\infty$, we deduce that \begin{equation}\label{eq5}
\langle \zeta_0,T_0\rangle \ge 0.
\end{equation}
Since $u_{s_n}\f u_s$ and $T_n$ is obtained by rotating
$u_{s_n}$ by $\frac{\pi}{2}$ clockwise, we have
\begin{equation*}
\angle(T_0,u_s)=\frac{\pi}{2}.
\end{equation*}
Combining this latter with \eqref{eq3} and \eqref{eq5}, and since in $\mathbb{R}^2$ the direction orthogonal to $u_s$ is unique up to sign, we conclude that $\zeta_0=T_0$, and hence
\begin{equation*}\label{eq6}
\angle(\zeta_0,u_s)=\frac{\pi}{2}.
\end{equation*}
Similarly, using the symmetric construction at $s_n'$, we get
\begin{equation*}\label{eq7}
\angle(u_{s'},\xi_0)=\frac{\pi}{2}.
\end{equation*}
Therefore (see Figure~\ref{Fig2}),
\begin{equation}\label{eq9}
\angle(\zeta,\zeta_0)=\angle(\zeta,u_s)-\frac{\pi}{2}\;\;\hbox{and}\;\;\angle(\zeta,\xi_0)=\angle(\zeta,u_{s'})+\frac{\pi}{2}.
\end{equation}
Using \eqref{eq0} and \eqref{eq9}, we obtain
\[
\angle(\zeta,\zeta_0)
=\angle(\zeta,u_s)-\frac{\pi}{2}
\le \angle(\zeta,u_x)-\frac{\pi}{2},
\]
which together with \eqref{eq1} yields $(ii)$. Similarly,
\[
\angle(\zeta,\xi_0)
=\angle(\zeta,u_{s'})+\frac{\pi}{2}
\ge \angle(\zeta,u_x)+\frac{\pi}{2},
\]
and combining with \eqref{eq2} gives $(iii)$. From $(ii)$ and $(iii)$ we have
\[
\angle(\zeta,\zeta_0)\le \angle(\zeta,\xi_0),
\]
and since oriented angles take values in $[0,2\pi)$, it follows that
\[
\angle(\zeta_0,\xi_0)
=\angle(\zeta,\xi_0)-\angle(\zeta,\zeta_0).
\]
Using \eqref{eq1} and \eqref{eq2}, we deduce
\[
\angle(\zeta_0,\xi_0)
\ge \left(\angle(\zeta,u_x)+\frac{\pi}{2}\right)
-\left(\angle(\zeta,u_x)-\frac{\pi}{2}\right)
=\pi,
\]
and
\[
\angle(\zeta_0,\xi_0)
\le \left(2\pi-\cos^{-1}\!\left(\frac{r_0}{2r}\right)\right)
-\cos^{-1}\!\left(\frac{r_0}{2r}\right)
=2\pi-2\cos^{-1}\!\left(\frac{r_0}{2r}\right).
\]
This proves $(i)$. In particular, $\angle(\zeta_0,\xi_0)\ge \pi$ implies that 
$\zeta_0\neq \xi_0$, and therefore the boundary point $s_0$ is not regular.
\end{proof}
%
%

We are now ready to prove Theorem~\ref{mainthm}. We first outline the argument. We proceed by contradiction. Assume that there exists a point $x_0\in S$ that cannot be covered by a closed ball of radius $\ra$ contained in $S$. Using Lemma~\ref{secondlem}, we show that $x_0$ belongs to a closed ball contained in $S$ of radius strictly less than $\ra$ whose boundary contains three boundary points of $S$, namely $s_0$, $s'_0$, and $s''_0$, which are not regular and for which appropriate normal vectors can be chosen. Then, applying Lemma~\ref{geolem}, we prove that the sum of the angles of the triangle $s_0s'_0s''_0$ is strictly less than $\pi$, which yields a contradiction.

\begin{proofof}{Theorem~\ref{mainthm}} Assume by contradiction that $S$ is not the union of closed balls of radius $\ra$, that is, $S_{\ra}\subsetneq S$ where $S_{\ra}$ denotes the union of all the closed balls of radius $\ra$ contained in $S$. Then there exists $x_0\in S\cap\big(S_{\ra}\big)^c$. Let $s_0\in\proj_{\bdry S} (x_0)$, $r_0:=\|x_0-s_0\|$ and $\zeta:=\frac{x_0-s_0}{\|x_0-s_0\|}\in N_{S'}^P(s_0)$. Since $x_0\in\overline{B}(s_0+r_0\zeta)\subset S$ and $x_0\not\in S_{\ra}$, we deduce that $r_0<\ra$. Hence, applying Lemma \ref{secondlem} for $r^*:=\ra$, $x_0:=x_0$, $s_0:=s_0$, $\zeta:=\zeta$, $\zeta_{s_0}:=\zeta$ and $r':=r_0$, we get that $s_0$ is not regular, and  the existence of a unit vector $u_x$ and two distinct unit normal vectors $\zeta_0$ and $\xi_0$ to $S'$ at $s_0$, realized by an $r$-sphere, such that \begin{equation}\label{eq10} \pi\leq \angle(\zeta_0,\xi_0)\leq 2\pi-2\cos^{-1}\!\left(\frac{r_0}{2r}\right),\end{equation}
 \begin{equation}\label{eq11} \cos^{-1}\!\left(\frac{r_0}{2r}\right)\le \angle(\zeta,\zeta_0)\le \angle(\zeta,u_x) - \frac{\pi}{2},\;\hbox{and}\end{equation}
 \begin{equation}\label{eq12} \angle(\zeta,u_x) +\frac{\pi}{2}\leq \angle(\zeta,\xi_0)\leq 2\pi-\cos^{-1}\!\left(\frac{r_0}{2r}\right).\end{equation}
 This yields that  \begin{equation}\label{final}
\cos^{-1}\!\left(\frac{r_0}{2r}\right)
\le \angle(\zeta,\zeta_0)
\le \angle(\zeta,\xi_0)-\pi
\le \pi-\cos^{-1}\!\left(\frac{r_0}{2r}\right).
\end{equation}
Since $x_0\not\in S_{\ra}$ and $x_0\in \overline{B}(s_0+r_0\zeta)\subset \overline{B}\big(s_0+\ra\zeta;\ra\big)$,  we deduce that $$\overline{B}\left(s_0+\ra\zeta;\ra\right)\not\subset S.$$ Let $r_1$ be the largest $t>0$ such that $\overline{B}(s_0+t\zeta;t)\subset S$.
Clearly, we have \begin{equation}\label{eq13} r_0\leq r_1<\ra,\;\;\overline{B}(s_0+r_1\zeta;r_1)\subset S,\;\hbox{and}\;\zeta\in N_{S'}^P(s_0)\;\hbox{is realized by an}\;r_1\hbox{-sphere}.\end{equation}
For all $n\in\N$, we define the sequence $(x_n')_n $ by $$x'_n\in\overline{B}\left(s_0+\left(r_1+\frac{1}{n}\right)\zeta; r_1+\frac{1}{n}\right)\cap S^c.$$
As the sequence $(x_n')_n $ is bounded, it has a subsequence, we do not relabel, that converges to a point $s'_0$ satisfying \begin{equation} \label{eq13bis} s'_0\in \S(s_0+r_1\zeta;\zeta)\cap\clo (S^c)\subset S\cap\clo (S^c)=\bdry S.\end{equation}  Since $x'_n\in S^c\subset S'$ and $\zeta_0\in N_{S'}^P(s_0)$ is realized by an $r$-sphere, we have $$\left\<\zeta_0,\frac{x'_n-s_0}{\|x'_n-s_0\|}\right\>\leq \frac{1}{2r}\|x'_n-s_0\|.$$
Taking $n\f\infty$ in this latter, we deduce that \begin{equation}\label{eq14} \<v_0,\zeta_0\>\leq \frac{\|s'_0-s_0\|}{2r}, \end{equation}
where $v_0$ is the limit, when $n\f\infty$, of a subsequence not relabeled of $\left(\frac{x'_n-s_0}{\|x'_n-s_0\|}\right)_n$. Similarly, we have  \begin{equation}\label{eq15} \<v_0,\xi_0\>\leq \frac{\|s'_0-s_0\|}{2r}.\end{equation}
{\bf Claim 1:} $s'_0\not=s_0$. \vspace{0.2cm}\\
If not, then from \eqref{eq14} and \eqref{eq15} we deduce that \begin{equation} \label{eq16} \<v_0,\zeta_0\>\leq 0\;\;\hbox{and}\;\;\<v_0,\xi_0\>\leq 0.\end{equation}
From \eqref{eq13} and since $x'_n\in S^c\subset S'$, we get that $$\left\<\zeta,\frac{x'_n-s_0}{\|x'_n-s_0\|}\right\>\leq\frac{1}{2r_1}\|x'_n-s_0\|,$$
and hence, after taking $n\f\infty$, $$\<\zeta,v_0\>\leq0.$$
Add to this that $x'_n\in \overline{B}\left(s_0+\left(r_1+\frac{1}{n}\right)\zeta; r_1+\frac{1}{n}\right)$ which yields that $$\|x'_n-s_0\|\leq 2\left(r_1+\frac{1}{n}\right)\left\<\frac{x'_n-s_0}{\|x'_n-s_0\|},\zeta\right\>,$$ and hence, after taking $n\f\infty$, $$\<\zeta,v_0\>\geq0,$$ we deduce that $$\<\zeta,v_0\>=0.$$ This gives that $$\angle(v_0,\zeta)=\frac{\pi}{2}\;\;\hbox{or}\;\; \angle(\zeta,v_0)=\frac{\pi}{2}.$$
Combining this latter with \eqref{eq11} and \eqref{eq12}, we get that $$\<v_0,\zeta_0\>>0\;\;\hbox{or}\;\;\<v_0,\xi_0\>>0,$$
which contradicts \eqref{eq16}. 

Therefore, $$s'_0\not=s_0.$$
Let $x_1:=s_0+r_1\zeta$. Clearly we have $s_0\in\S(x_1;r_1)$. Moreover, from \eqref{eq13bis} we deduce that $s'_0\in\S(x_1;r_1)$, see Figure \ref{Fig4}. We define $$r'_0:=\|x_0-s'_0\|,\;\;\zeta':=\frac{x_0-s'_0}{\|x_0-s'_0\|},\;\hbox{and}\;\,\zeta_{s'_0}:=\frac{x_1-s'_0}{\|x_1-s'_0\|}. $$
\begin{figure}[h!]
\centering
\includegraphics[scale=0.45]{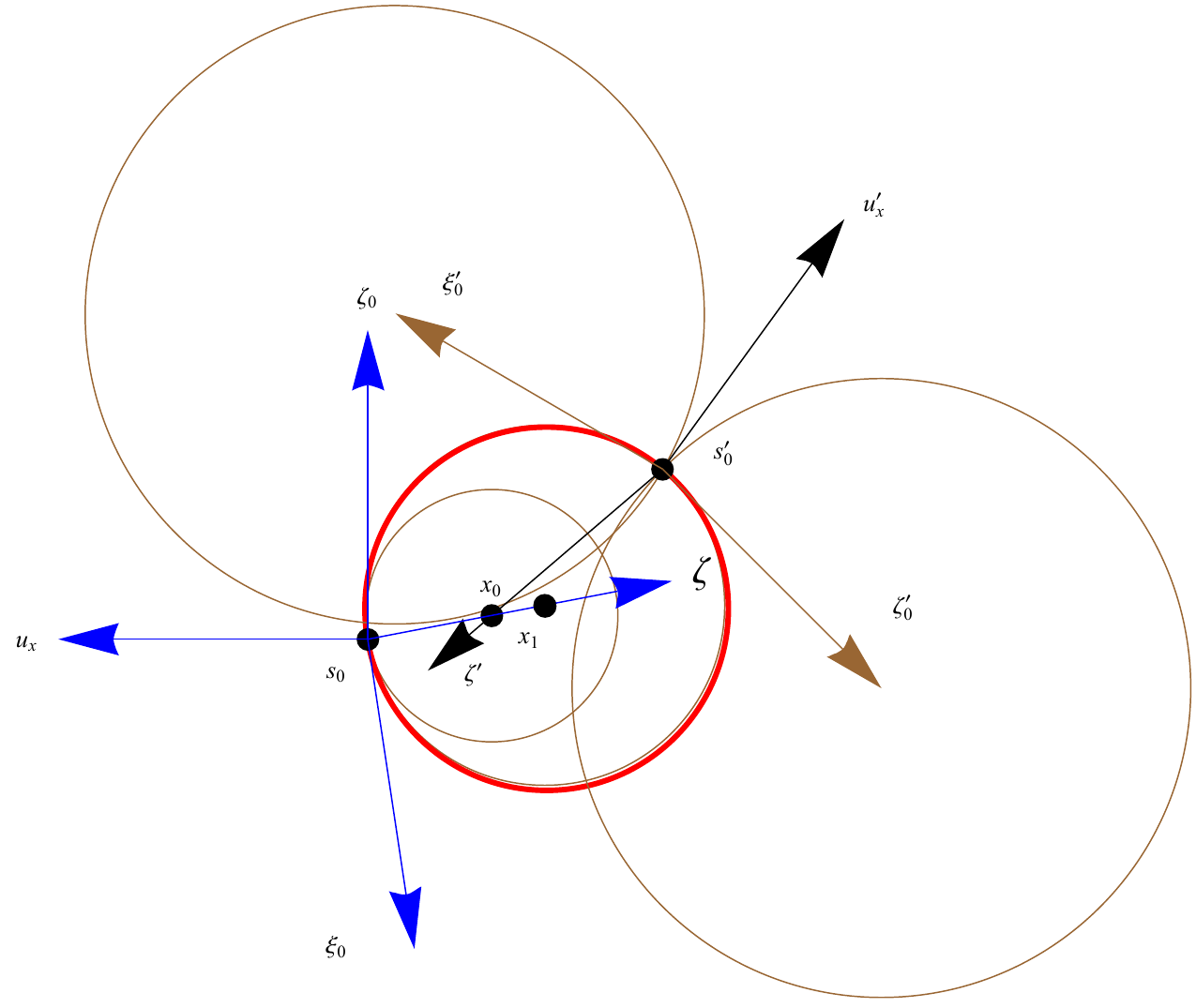}
\caption{\label{Fig4} The boundary point $s'_0$ is not regular} 
\end{figure}
Using \eqref{eq13}, we have $x_0\in \overline{B}(x_1;r_1)=\overline{B}(s'_0+r_1\zeta_{s'_0};r_1)\subset S$. This yields that $$ B(s'_0+r_1\zeta_{s'_0};r_1)\subset \inte S=(S')^c,$$
and hence, $\zeta_{s'_0}\in N_{S'}^P(s'_0)$ is realized by an $r_1$-sphere. Applying Lemma \ref{secondlem} for $r^*:=\ra$, $x_0:=x_0$, $s_0:=s'_0$, $\zeta:=\zeta'$, $\zeta_{s_0}=:\zeta_{s'_0}$ and $r':=r_1$, we get that $s'_0$ is not regular, and the existence of a unit vector $u'_x$ and two distinct unit normal vectors $\zeta'_0$ and $\xi'_0$ to $S'$ at $s'_0$, realized by an $r$-sphere, see Figure \ref{Fig4}, such that \begin{equation*}\label{eq17} \pi\leq \angle(\zeta'_0,\xi'_0)\leq 2\pi-2\cos^{-1}\!\left(\frac{r'_0}{2r}\right),\end{equation*}
 \begin{equation*}\label{eq18} \cos^{-1}\!\left(\frac{r'_0}{2r}\right)\le \angle(\zeta',\zeta'_0)\le \angle(\zeta',u'_x) - \frac{\pi}{2},\;\hbox{and}\end{equation*}
 \begin{equation*}\label{eq19} \angle(\zeta',u'_x) +\frac{\pi}{2}\leq \angle(\zeta',\xi'_0)\leq 2\pi-\cos^{-1}\!\left(\frac{r'_0}{2r}\right).\end{equation*}
 This yields that 
 \begin{equation}\label{finalbis}\cos^{-1}\!\left(\frac{r'_0}{2r}\right)
\le \angle(\zeta',\zeta'_0)
\le \angle(\zeta',\xi'_0)-\pi
\le \pi-\cos^{-1}\!\left(\frac{r'_0}{2r}\right). \end{equation}
Let $m:=\frac{s_0+s'_0}{2}$ and let $\zeta_1$ be the unit vector defined by $$\zeta_1:=\begin{cases} \frac{x_1-m}{\|x_1-m\|}, & \hbox{if}\;x_1\not=m,\vspace{0.1cm} \\\hbox{a unit vector perpendicular to}\;[s_0,s'_0], &  \hbox{if}\;x_1=m.  \end{cases}$$
As $\|x_1-s_0\|=\|x_1-s'_0\|=r_1$, we have that $\zeta_1$ is perpendicular to $[s_0,s'_0]$. Using  \eqref{eq13}, and the facts that $x_1= s_0+r_1\zeta$ and $x_0=s_0+r_0\zeta$ with $r_0\leq r_1$, we deduce that  $$x_0\in [s_0,x_1]\subset \overline{B}(x_1;r_1)= \overline{B}(m+\|x_1-m\|\zeta_1;r_1)=\overline{B}\left(m+\sqrt{r_1^2-\|s_0-m\|^2}\,\zeta_1;r_1\right)\subset S.$$
Since $\<s_0-m,\zeta_1\>=0$, we have $$\left\|s_0-m-\sqrt{\left(\ra\right)^2-\|s_0-m\|^2}\,\zeta_1\right\|^2=\left(\ra\right)^2,$$ which yields that \begin{eqnarray} \label{eq20} s_0&\in&\S\left(m+\sqrt{\left(\ra\right)^2-\|s_0-m\|^2}\,\zeta_1;\ra\right)\\&\subset& \overline{B}\left(m+\sqrt{\left(\ra\right)^2-\|s_0-m\|^2}\,\zeta_1;\ra\right).\nonumber \end{eqnarray} Moreover,
\begin{eqnarray} \nonumber \left\|x_1- m-\sqrt{\left(\ra\right)^2-\|s_0-m\|^2}\,\zeta_1\right\|&=& \nonumber \left\|\|x_1- m\|\zeta_1-\sqrt{\left(\ra\right)^2-\|s_0-m\|^2}\,\zeta_1\right\|\\ &=&\nonumber \left|\|x_1-m\|-\sqrt{\left(\ra\right)^2-\|s_0-m\|^2}\right| \\&=& \nonumber\sqrt{\left(\ra\right)^2-\|s_0-m\|^2} - \sqrt{r_1^2-\|s_0-m\|^2} \\ &\leq& \ra.\nonumber  \end{eqnarray}
Then $x_1\in\overline{B}\left(m+\sqrt{\left(\ra\right)^2-\|s_0-m\|^2}\,\zeta_1;\ra\right)$. Hence, $$x_0\in [s_0,x_1]\subset \overline{B}\left(m+\sqrt{\left(\ra\right)^2-\|s_0-m\|^2}\,\zeta_1;\ra\right),$$
which yields, since $x_0\not\in S_{\ra}$, that $$\overline{B}\left(m+\sqrt{\left(\ra\right)^2-\|s_0-m\|^2}\,\zeta_1;\ra\right)\not\subset S. $$
Let $r_2$ be the largest $t>0$ such that $$\overline{B}\left(m+\sqrt{t^2-\|s_0-m\|^2}\,\zeta_1;t\right)\subset S.$$
Clearly we have $$r_1\leq r_2<\ra\;\;\hbox{and}\;\;\overline{B}\left(m+\sqrt{r_2^2-\|s_0-m\|^2}\,\zeta_1;r_2\right)\subset S.$$ Moreover, for $x_2:=m+\sqrt{r_2^2-\|s_0-m\|^2}\,\zeta_1$, we have, using arguments similar to whose used to prove \eqref{eq20}, see Figure \ref{Fig5}, that $$s_0\in\S (x_2;r_2)\subset \overline{B}(x_2;r_2)\subset S\;\;\hbox{and}\;\;s'_0\in\S (x_2;r_2)\subset \overline{B}(x_2;r_2)\subset S.$$
\begin{figure}[h!]
\centering
\includegraphics[scale=0.47]{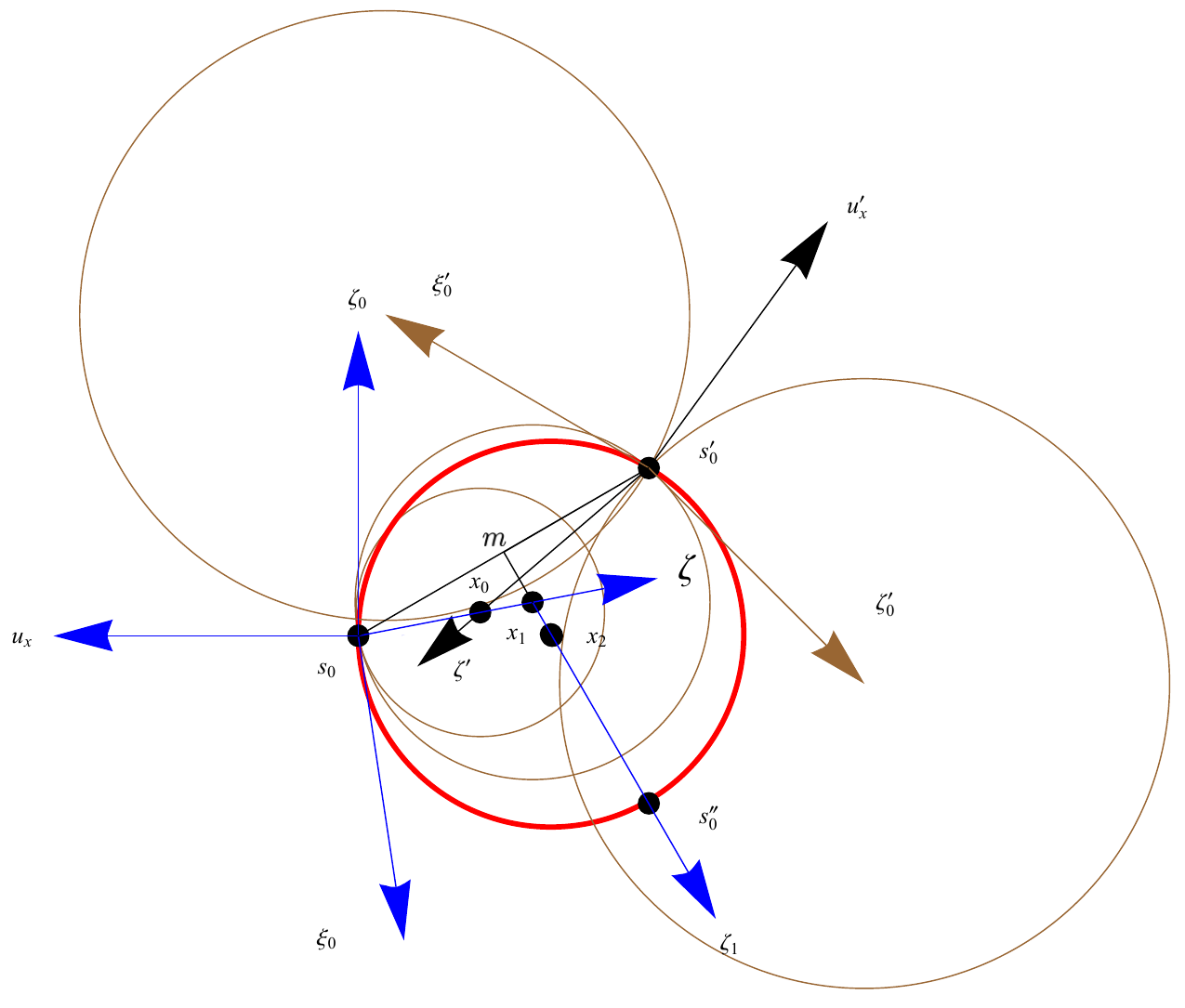}
\caption{\label{Fig5} The ball $\overline{B}(x_2;r_2)$} 
\end{figure}
We proceed to prove that  $\overline{B}(x_2;r_2)$ contains a third boundary point $s''_0$ of $S$ which is not regular. For all $n\in\N$, we define the sequence $(x_n'')_n $ by $$x''_n\in\overline{B}\left(m+\sqrt{\left(r_2+\frac{1}{n}\right)^2-\|s_0-m\|^2}\,\zeta_1;r_2+\frac{1}{n}\right)\cap S^c.$$
As the sequence $(x_n'')_n $ is bounded, it has a subsequence, we do not relabel, that converges to a point $s''_0$ satisfying \begin{equation*} \label{eq21} s''_0\in \S(x_2;r_2)\cap\clo (S^c)\subset \overline{B}(x_2;r_2)\cap\clo (S^c)\subset  S\cap\clo (S^c)=\bdry S.\end{equation*} 
{\bf Claim 2:} $s''_0\not=s_0$ and $s''_0\not=s'_0$. \vspace{0.2cm}\\
It suffices to prove that $s''_0\neq s_0$, since $s''_0\neq s'_0$ follows by the same argument. Since $x''_n\in S^c\subset S'$ and $\zeta_0\in N_{S'}^P(s_0)$ is realized by an $r$-sphere, we have $$\left\<\zeta_0,\frac{x''_n-s_0}{\|x''_n-s_0\|}\right\>\leq \frac{1}{2r}\|x''_n-s_0\|.$$
Taking $n\f\infty$ in this latter, we deduce that \begin{equation}\label{eq14bis} \<v''_0,\zeta_0\>\leq \frac{\|s''_0-s_0\|}{2r}, \end{equation}
where $v''_0$ is the limit, when $n\f\infty$, of a subsequence not relabeled of $\left(\frac{x''_n-s_0}{\|x''_n-s_0\|}\right)_n$. Similarly, we have  \begin{equation}\label{eq15bis} \<v''_0,\xi_0\>\leq \frac{\|s''_0-s_0\|}{2r}.\end{equation}
If $s''_0=s_0$, then from \eqref{eq14bis} and \eqref{eq15bis} we deduce that \begin{equation} \label{eq16bis} \<v''_0,\zeta_0\>\leq 0\;\;\hbox{and}\;\;\<v''_0,\xi_0\>\leq 0.\end{equation}
For $u_0:=\frac{x_2-s_0}{\|x_2-s_0\|}$, we have $\bar{B}(s_0+r_2u_0;r_2)=\bar{B}(x_2;r_2)\subset S$. Then $u_0\in N_S^P(s_0)$ and is realized by an $r_2$-sphere. Since $x''_n\in S^c\subset S'$, we get that $$\left\<u_0,\frac{x''_n-s_0}{\|x''_n-s_0\|}\right\>\leq\frac{1}{2r_2}\|x''_n-s_0\|,$$
and hence, after taking $n\f\infty$, \begin{equation}\label{end1}  \<u_0,v''_0\>\leq0.\end{equation}
On the other hand, we have $$x''_n\in \overline{B}\left(m+\sqrt{\left(r_2+\frac{1}{n}\right)^2-\|s_0-m\|^2}\,\zeta_1;r_2+\frac{1}{n}\right)=\bar{B}\left(s_0+\left(r_2+\frac{1}{n}\right)u_n;r_2+\frac{1}{n}\right),$$ where $u_n:=\frac{x_{2,n}-s_0}{\|x_{2,n}-s_0\|}$ and $x_{2,n}:= m+\sqrt{\left(r_2+\frac{1}{n}\right)^2-\|s_0-m\|^2}\,\zeta_1$, which yields that  $$\|x''_n-s_0\|\leq 2\left(r_2+\frac{1}{n}\right)\left\<\frac{x''_n-s_0}{\|x''_n-s_0\|},u_n\right\>,$$ and hence, after taking $n\f\infty$, $$\<u_0,v''_0\>\geq0.$$ Combining this latter with \eqref{end1}, we deduce that $$\<u_0,v''_0\>=0.$$ This gives that \begin{equation} \label{end2} \angle(v''_0,u_0)=\frac{\pi}{2}\;\;\hbox{or}\;\; \angle(u_0,v''_0)=\frac{\pi}{2}.\end{equation}
We have \begin{eqnarray} \nonumber \left\|x_1- m-\sqrt{r_2^2-\|s_0-m\|^2}\,\zeta_1\right\|&=& \nonumber \left\|\|x_1- m\|\zeta_1-\sqrt{r_2^2-\|s_0-m\|^2}\,\zeta_1\right\|\\ &=&\nonumber \left|\|x_1-m\|-\sqrt{r_2^2-\|s_0-m\|^2}\right| \\&=& \nonumber\sqrt{r_2^2-\|s_0-m\|^2} - \sqrt{r_1^2-\|s_0-m\|^2} \\ &\leq& r_2.\nonumber  \end{eqnarray}
Then $x_1\in\overline{B}(x_2;r_2)$, and hence \begin{equation} \label{eq22} x_0\in [s_0,x_1]\subset  \overline{B}(x_2;r_2)=\bar{B}(s_0+r_2u_0;r_2)\subset S.\end{equation}
This yields that $\|x_0-s_0\|\leq 2r_2\<\zeta,u_0\> $, and hence, $\<\zeta,u_0\>\geq \frac{r_0}{2r_2}>\frac{r_0}{2r}$. Combining this latter with \eqref{eq11}-\eqref{eq12}, we conclude that \begin{equation} \label{u_01}
0 \le \angle(u_0,\zeta) \le \cos^{-1}\!\left(\frac{r_0}{2r_2}\right)
< \cos^{-1}\!\left(\frac{r_0}{2r}\right)
\le \angle(\xi_0,\zeta),
\end{equation}
or
 \begin{equation}\label{u_02}
0 \le \angle(\zeta,u_0) \le \cos^{-1}\!\left(\frac{r_0}{2r_2}\right)
< \cos^{-1}\!\left(\frac{r_0}{2r}\right)
\le \angle(\zeta,\zeta_0).
\end{equation}
In particular,
\begin{equation}\label{angle-u0}
u_0 \neq \zeta_0\;\; \text{and}\;\;
u_0 \neq \xi_0.
\end{equation}
Without loss of generality, we assume that \eqref{u_01} holds, as the case \eqref{u_02} leads to same results from similar arguments. \vspace{0.2cm}\\
\underline{Case 1}: $0 \le \angle(\xi_0,v''_0) < \angle(\xi_0,\zeta_0)$ (see Figure \ref{Fig8123}).\vspace{0.2cm}\\
Then, using \eqref{eq10} and \eqref{eq16bis}, we get that 
\[
\pi\geq \angle(\xi_0,\zeta_0)
= \angle(\xi_0,v''_0)+\angle(v''_0,\zeta_0)
\ge \frac{\pi}{2}+\frac{\pi}{2}
= \pi,
\]
which implies
\[
\angle(\xi_0,\zeta_0)=\pi,\;\;\angle(\xi_0,v''_0)=\frac{\pi}{2},\;\;\hbox{and}\;\;
\angle(v''_0,\zeta_0)=\frac{\pi}{2}.
\]
Since $\langle v''_0,u_0\rangle=0$, this yields that $u_0 \in \{\zeta_0,\xi_0\},$ which contradicts \eqref{angle-u0}.\vspace{0.2cm}\\
\underline{Case 2}: $0 \le \angle(\zeta_0,v''_0) < \angle(\zeta_0,\xi_0)$ (see Figure \ref{Fig8123}).\vspace{0.2cm}\\
In view of \eqref{end2}, we distinguish the following two subcases.\vspace{0.2cm}\\
\underline{Case 2.1}: $\angle(v''_0,u_0)=\frac{\pi}{2}$.\vspace{0.2cm}\\
\begin{figure}[t]
\centering
\includegraphics[scale=0.237]{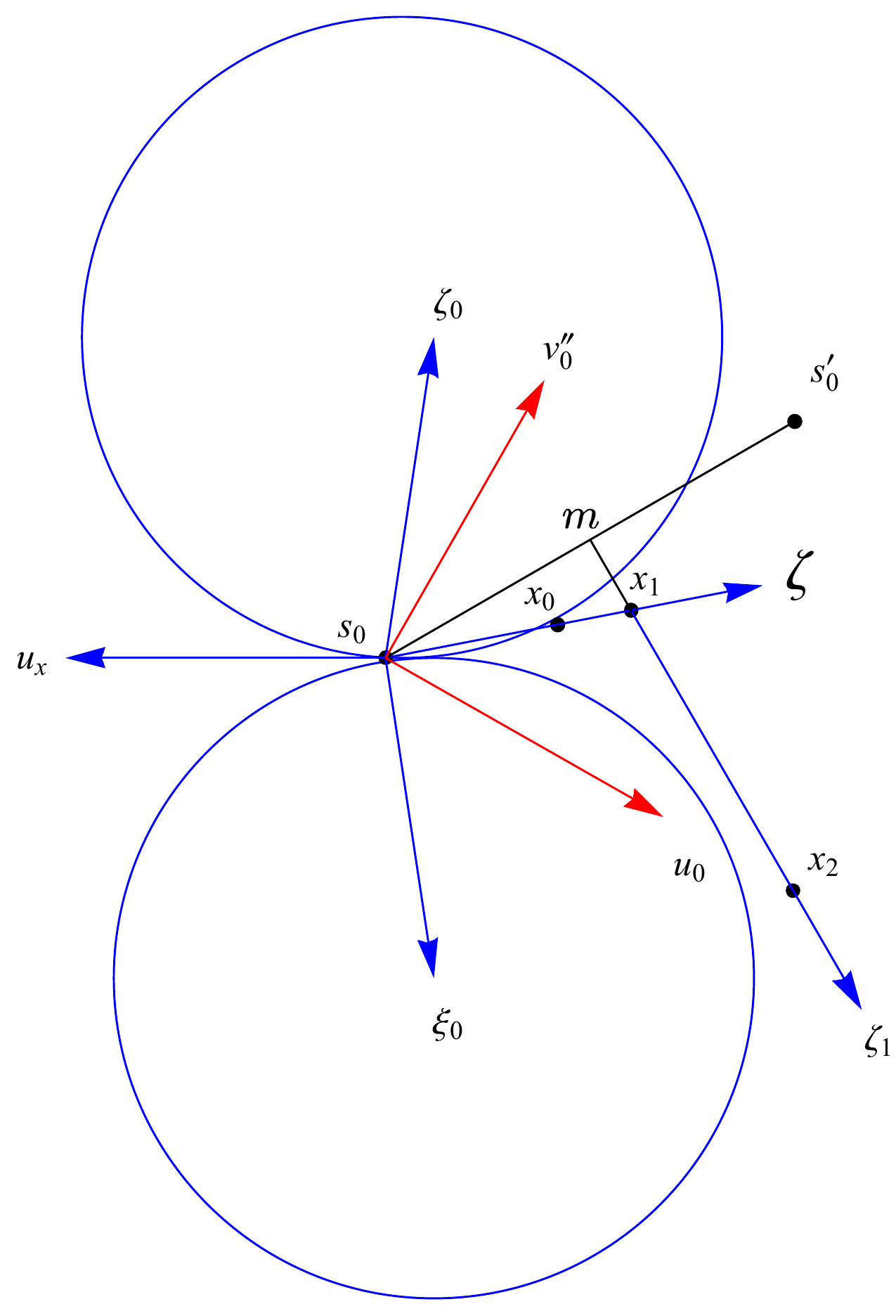}
\includegraphics[scale=0.237]{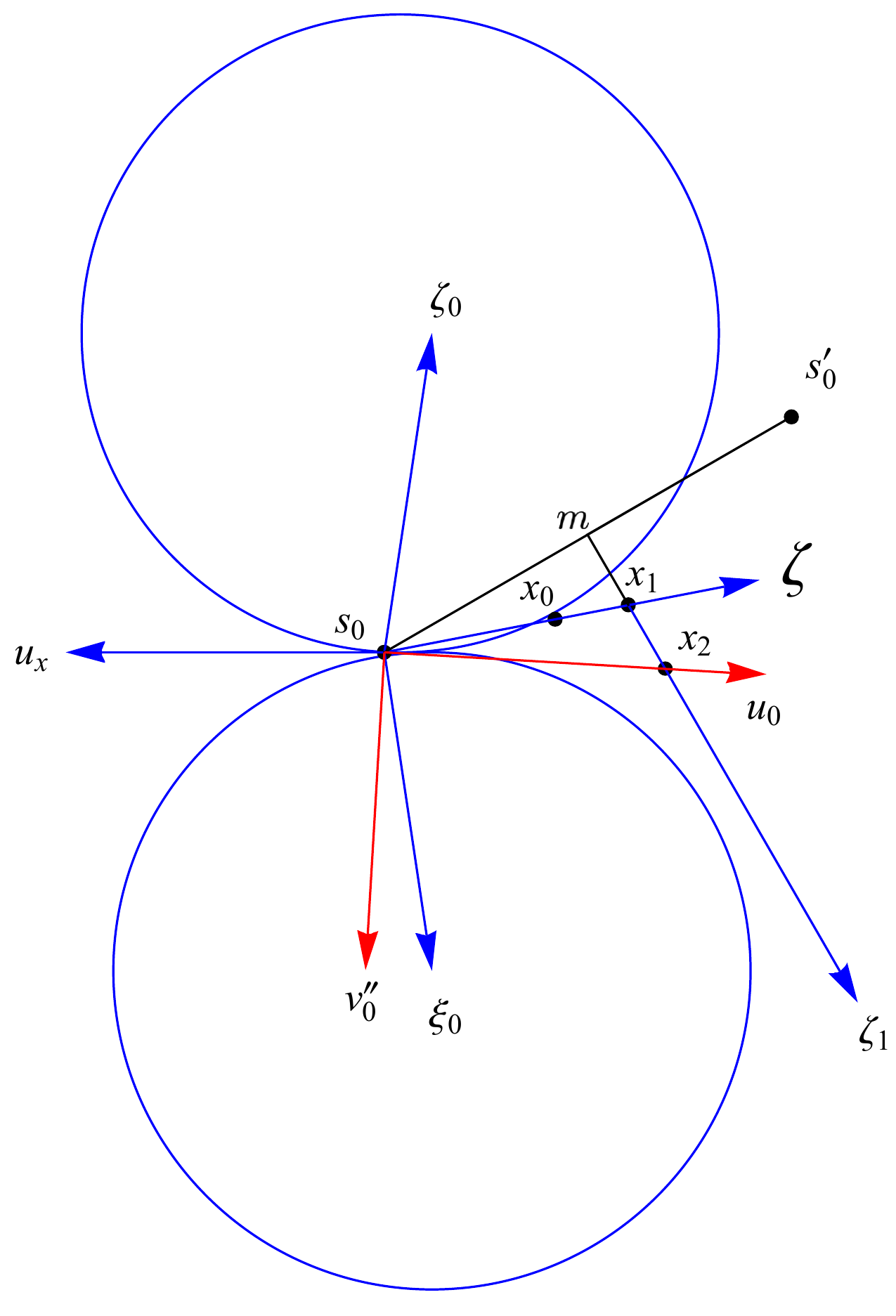}
\includegraphics[scale=0.25]{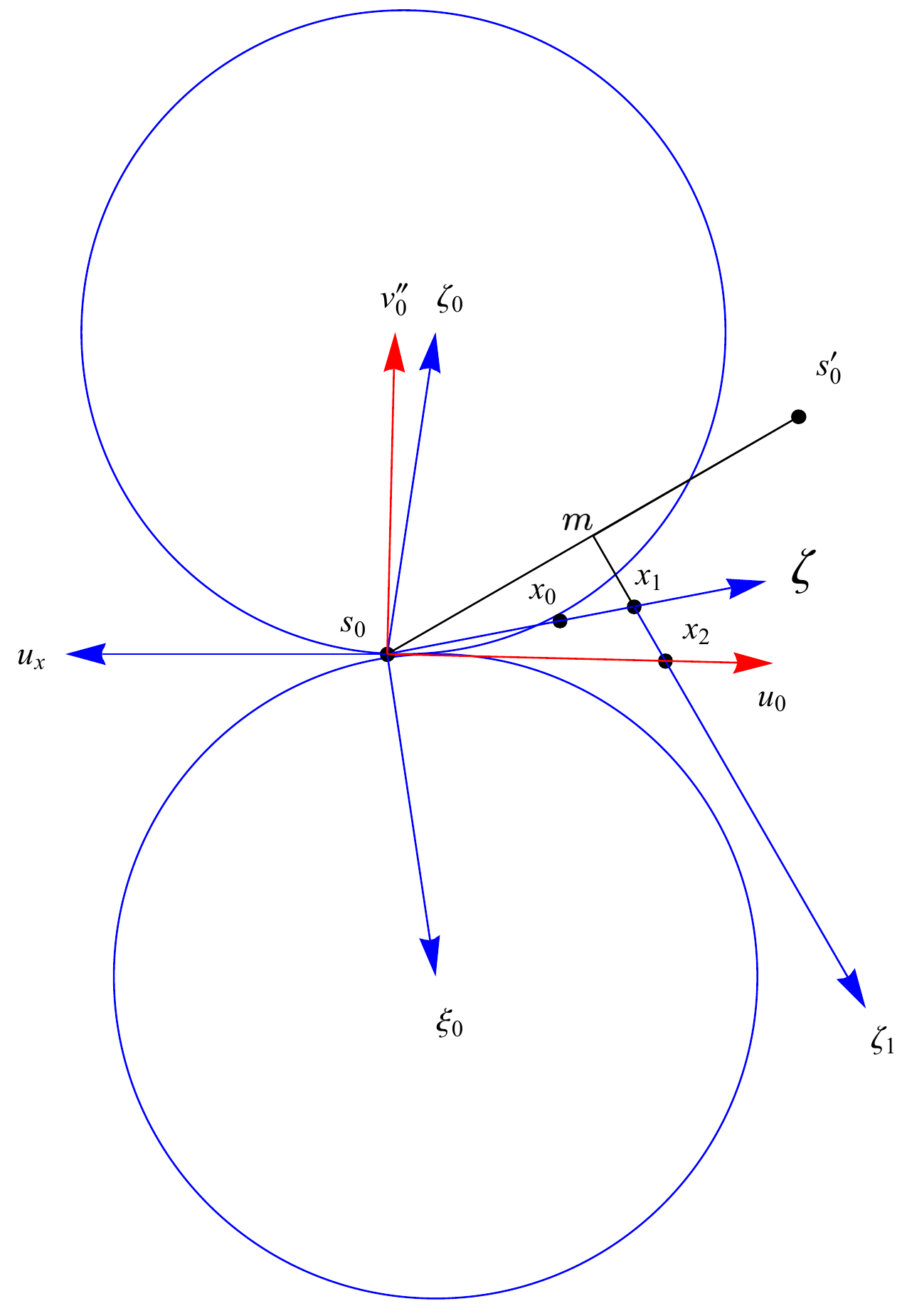}
\caption{\label{Fig8123} $s''_0\not=s_0$} 
\end{figure}
Then, 
\begin{eqnarray*}0\leq\angle(v''_0,\xi_0)=\angle(v''_0,u_0) + \angle(u_0,\zeta)-\angle(\xi_0,\zeta)&=&\frac{\pi}{2}+\angle(u_0,\zeta)-\angle(\xi_0,\zeta)\\ &\overset{\eqref{u_01}}{\leq}& \frac{\pi}{2} +  \cos^{-1}\!\left(\frac{r_0}{2r_2}\right)
-\cos^{-1}\!\left(\frac{r_0}{2r}\right)\\&<&\frac{\pi}{2}.\end{eqnarray*}
This gives that $\langle v''_0,\xi_0\rangle>0$ which contradicts \eqref{eq16bis}.\vspace{0.2cm}\\
\underline{Case 2.2}: $\angle(u_0,v''_0)=\frac{\pi}{2}$.\vspace{0.2cm}\\
Then,
\begin{eqnarray*}0\leq\angle(\zeta_0,v''_0)=\angle(u_0,v''_0) - \angle(u_0,\zeta)-\angle(\zeta,\zeta_0)&=&\frac{\pi}{2}-\angle(u_0,\zeta)-\angle(\zeta,\zeta_0)\\ &\leq& \frac{\pi}{2} -\angle(\zeta,\zeta_0) \\ &\overset{\eqref{eq11}}{\leq}& \frac{\pi}{2}-\cos^{-1}\!\left(\frac{r_0}{2r}\right)\\&<&\frac{\pi}{2}.\end{eqnarray*}
This gives that $\langle v''_0,\zeta_0\rangle>0$ which again contradicts \eqref{eq16bis}.

Therefore, $$s''_0\not=s_0\;\,\hbox{and}\;\,s''_0\not=s'_0.$$
Let $r''_0:=\|x_0-s''_0\|$, $\zeta'':=\frac{x_0-s''_0}{\|x_0-s''_0\|}$ and $\zeta_{s''_0}:=\frac{x_2-s''_0}{\|x_2-s''_0\|}$. From \eqref{eq22}, we have $$x_0\in \overline{B}(x_2;r_2)=\overline{B}(s''_0+r_2\zeta_{s''_0};r_2)\subset S.$$
This yields that $\zeta_{s''_0}\in N_S^P(s''_0)$ is realized by an $r_2$-sphere. Applying Lemma \ref{secondlem} for $r^*:=\ra$, $x_0:=x_0$, $s_0:=s''_0$, $\zeta:=\zeta''$, $\zeta_{s_0}=:\zeta_{s''_0}$ and $r':=r_2$, we get that $s''_0$ is not regular, and the existence of a unit vector $u''_x$ and two distinct unit normal vectors $\zeta''_0$ and $\xi''_0$ to $S'$ at $s''_0$, realized by an $r$-sphere, see Figure \ref{Fig6}, such that \begin{equation*}\label{eq17bis} \pi\leq \angle(\zeta''_0,\xi''_0)\leq 2\pi-2\cos^{-1}\!\left(\frac{r''_0}{2r}\right),\end{equation*}
 \begin{equation*}\label{eq18bis} \cos^{-1}\!\left(\frac{r''_0}{2r}\right)\le \angle(\zeta'',\zeta''_0)\le \angle(\zeta'',u''_x) - \frac{\pi}{2},\;\hbox{and}\end{equation*}
 \begin{equation*}\label{eq19bis} \angle(\zeta'',u''_x) +\frac{\pi}{2}\leq \angle(\zeta'',\xi''_0)\leq 2\pi-\cos^{-1}\!\left(\frac{r''_0}{2r}\right).\end{equation*}
This yields that
\begin{equation}\label{finalbisbis}\cos^{-1}\!\left(\frac{r''_0}{2r}\right)
\le \angle(\zeta'',\zeta''_0)
\le \angle(\zeta'',\xi''_0)-\pi
\le \pi-\cos^{-1}\!\left(\frac{r''_0}{2r}\right). \end{equation}
 \begin{figure}[h!]
\centering
\includegraphics[scale=0.4]{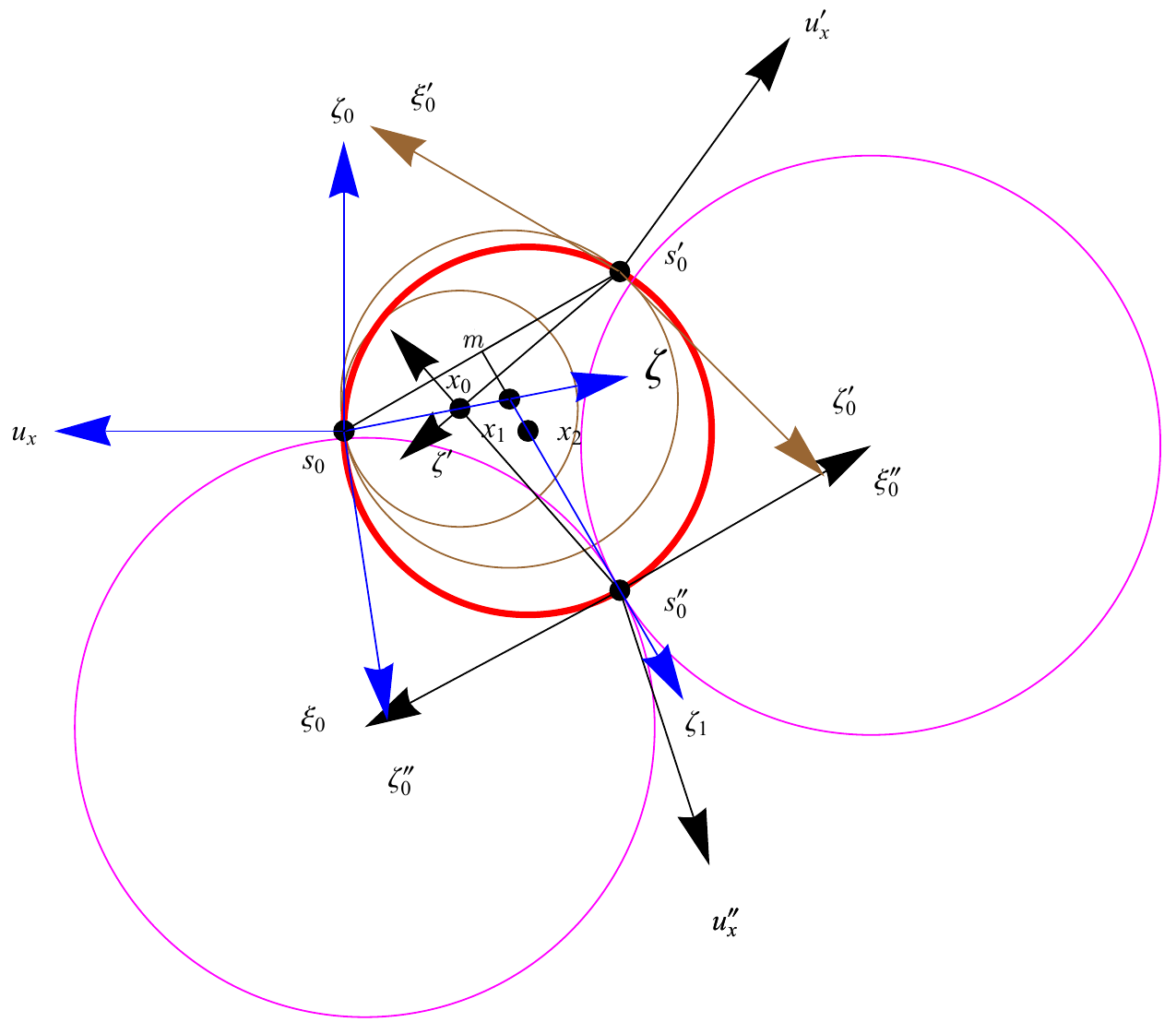}
\caption{\label{Fig6} The boundary point $s''_0$ is not regular} 
\end{figure}
Using \eqref{final}, and taking $O:=s_0$, $A:=x_0$, $B:=s_0+r\xi_0$, $C:=s_0+r\zeta_0$, $N:=s'_0$, and $M:=s''_0$, the assumptions of Lemma~\ref{geolem} are satisfied (see Figure~\ref{Fig7}). Hence,
\[
\angle s''_0 s_0 s'_0 < \frac{\pi}{3}.
\]
\begin{figure}[t]
\centering
\includegraphics[scale=1.5]{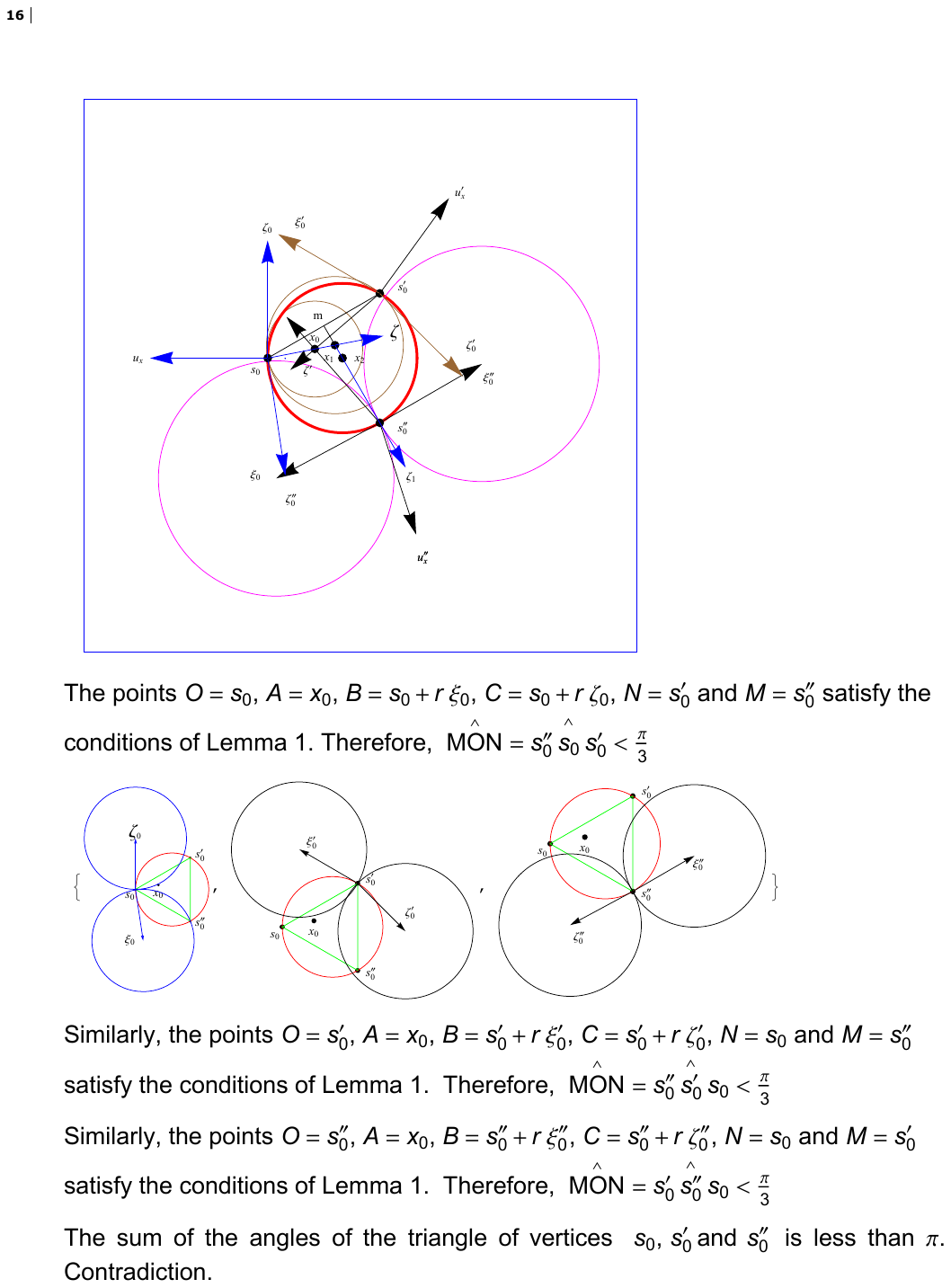}
\includegraphics[scale=1.2]{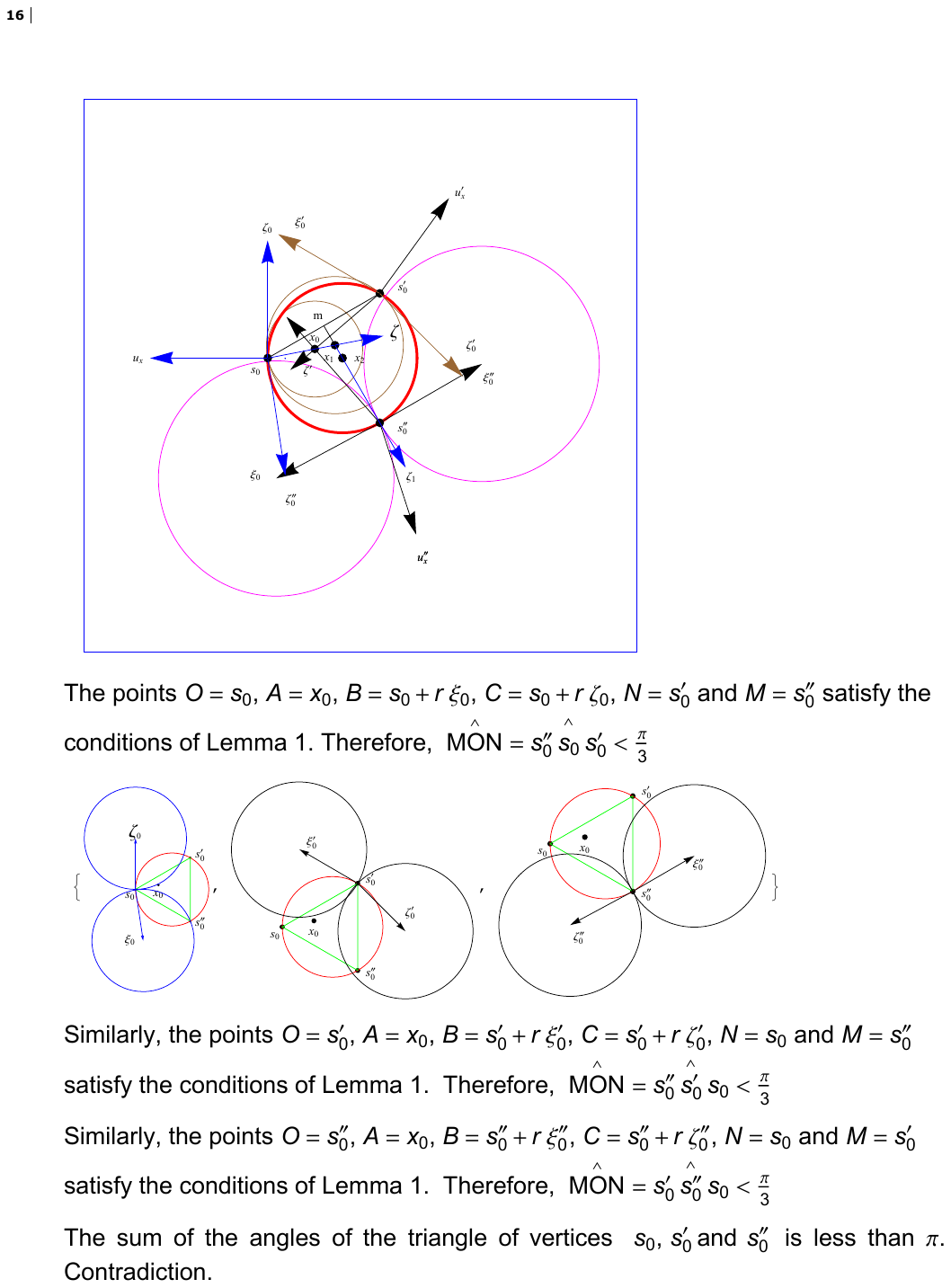}
\includegraphics[scale=1.2]{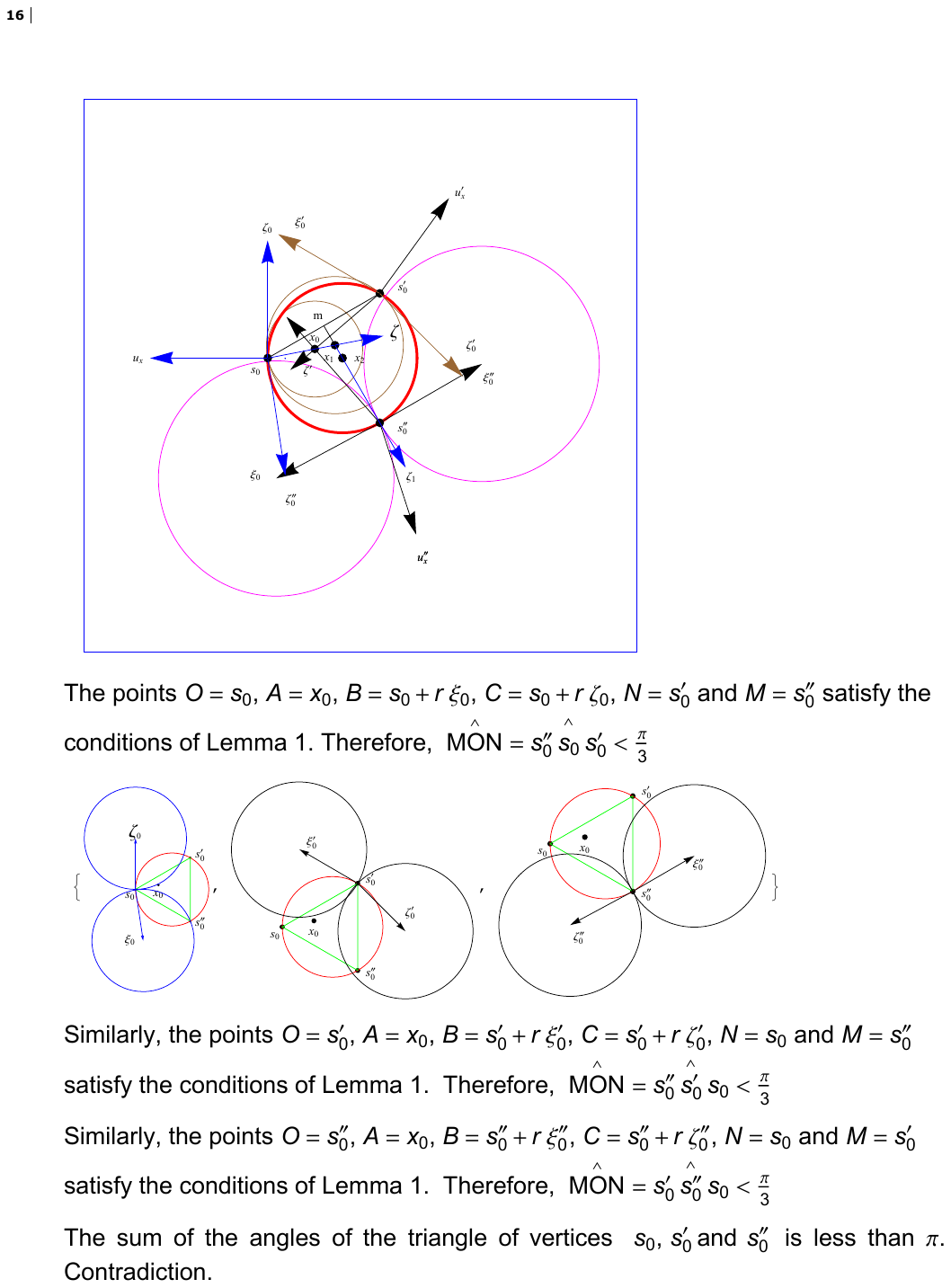}
\caption{\label{Fig7} The angles $\angle s''_0 s_0 s'_0$, $\angle s_0 s'_0 s''_0$ and $\angle s'_0 s''_0 s_0$} 
\end{figure}
Similarly, using \eqref{finalbis}, and taking $O:=s'_0$, $A:=x_0$, $B:=s'_0+r\xi'_0$, $C:=s'_0+r\zeta'_0$, $N:=s_0$ and $M:=s''_0$, the assumptions of Lemma~\ref{geolem} are satisfied (see Figure~\ref{Fig7}). Hence,
\[
\angle s_0 s'_0 s''_0 < \frac{\pi}{3}.
\]
Similarly, using \eqref{finalbisbis}, and taking $O:=s''_0$, $A:=x_0$, $B:=s''_0+r\xi''_0$, $C:=s''_0+r\zeta''_0$, $N:=s_0$ and $M:=s'_0$, the assumptions of Lemma~\ref{geolem} are satisfied (see Figure~\ref{Fig7}). Hence,
\[
\angle s'_0 s''_0 s_0 < \frac{\pi}{3}.
\]
Summing the three inequalities, we obtain
\[
\angle s''_0 s_0 s'_0
+\angle s_0 s'_0 s''_0 
+\angle s'_0 s''_0 s_0 
< \pi,
\]
which contradicts the fact that the sum of the angles of the triangle $s_0 s'_0 s''_0$ is equal to $\pi$. The proof of Theorem~\ref{mainthm} is completed. \end{proofof}

\begin{remark} The proof shows that the contradiction arises from an extremal geometric configuration, corresponding to the case where the angle estimates in Lemma~\ref{secondlem} are saturated. In this situation, one has $\xi_0 = -\zeta_0$ (and similarly at the other contact points), and the three boundary points lie on the spheres realizing the corresponding normal directions. If $r_0<\ra$, then at least one of the boundary points belongs to the interior of one of these balls, which contradicts the construction.
\end{remark}

\section{Conclusion and perspectives} \label{sectionfinal}

The proof of Theorem~\ref{mainthm} relies on a geometric mechanism that admits a natural interpretation in higher dimension. 
The first step of the argument consists in showing that if there exists a point
$x_0\in S\cap \big(S_{\ra}\big)^c$, then $x_0$ belongs to a closed ball contained in $S$
whose boundary contains several boundary points of $S$. In the planar case, this construction yields a closed ball whose boundary contains three boundary points of $S$ that are not regular. The contradiction is then obtained from a purely two-dimensional geometric argument based on angle estimates.

In $\mathbb{R}^n$, the same construction suggests that, under the same assumption,
one should obtain a closed ball contained in $S$ whose boundary contains at least
$n+1$ boundary points of $S$ that are not regular. This corresponds to the minimal number of contact points required to determine a sphere in dimension $n$. Therefore, the first part of the argument is not specific to the planar case and could potentially be extended to arbitrary dimension.

The main difficulty arises in the final step of the proof. In the plane, the contradiction follows from the fact that the sum of the angles of a triangle is equal to $\pi$, together with sharp local angle estimates. This argument has no direct analogue in higher dimension, where the geometry of normal cones is more complex and cannot be reduced to a one-dimensional angular structure. Thus, extending Theorem~\ref{mainthm} to $\mathbb{R}^n$, $n\ge 3$, requires identifying a higher–dimensional substitute for the planar angle argument used in the last step of the proof.

\end{document}